\numberwithin{theorem}{section}
\newcommand{\TheTitle}{Stochastic Primal-Dual Hybrid Gradient Algorithm with Arbitrary Sampling and Imaging Applications}
\newcommand{\TheAuthors}{A. Chambolle, M. J. Ehrhardt, P. Richt\'{a}rik and C.-B. Sch\"{o}nlieb}
\title{{\TheTitle}\thanks{Submitted to the editors 15/06/2017.
\funding{A. C. benefited from a support of the ANR, \enquote{EANOI} Project I1148 / ANR-12-IS01-0003 (joint with FWF). Part of this work was done while he was hosted in Churchill College and DAMTP, Centre for Mathematical Sciences, University of Cambridge, thanks to a support of the French Embassy in the UK and the Cantab Capital Institute for Mathematics of Information. M. J. E. and C.-B. S. acknowledge support from Leverhulme Trust project \enquote{Breaking the non-convexity barrier}, EPSRC grant \enquote{EP/M00483X/1}, EPSRC centre \enquote{EP/N014588/1}, the Cantab Capital Institute for the Mathematics of Information, and from CHiPS (Horizon 2020 RISE project grant). M. J. E. has carried out initial work supported by the EPSRC platform grant \enquote{EP/M020533/1}. Moreover, C.-B. S. is thankful for support by the Alan Turing Institute. P. R. acknowledges the support of EPSRC Fellowship in Mathematical Sciences \enquote{EP/N005538/1} entitled \enquote{Randomized algorithms for extreme convex optimization}.}}}
\author{
  Antonin Chambolle\thanks{CMAP, Ecole Polytechnique, CNRS, France (\email{antonin.chambolle@cmap.polytechnique.fr}).}%
  \and%
  Matthias J. Ehrhardt\thanks{Department for Applied Mathematics and Theoretical Physics, University of Cambridge, Cambridge CB3 0WA, UK (\email{m.j.ehrhardt@damtp.cam.ac.uk}, \email{cbs31@cam.ac.uk}).}%
  \and%
  Peter Richt\'{a}rik\thanks{(i) Visual Computing Center \& Extreme Computing Research Center, KAUST, Thuwal, Saudi Arabia; (ii) School of Mathematics, University of Edinburgh, Edinburgh, United Kingdom; (iii) The Alan Turing Institute, London, United Kingdom (\email{peter.richtarik@kaust.edu.sa, peter.richtarik@ed.ac.uk}).}%
  \and%
  Carola-Bibiane Sch\"{o}nlieb\footnotemark[3]%
}
\newcommand{\E}{\mathbb{E}}
\newcommand{\sRI}{\sR_\infty}
\newcommand{\Angle}[2]{\langle #1, #2 \rangle}
\newcommand{\Prox}[3]{\operatorname{prox}^{#1}_{#2}\left(#3\right)}
\def\ParamTheta{\theta}
\def\ParamSigmaMat{\mS}
\def\ParamTauMat{\mT}
\def\opt{\sharp}
\def\iter{k}
\def\Iter{K}
\def\E{\mathbb{E}}
\newcommand{\De}{\coloneqq}
\newtheorem{remark}{Remark}
\newtheorem{example}{Example}
\newcommand{\mA}{{\bf A}}
\newcommand{\mB}{{\bf B}}
\newcommand{\mC}{{\bf C}}
\newcommand{\mD}{{\bf D}}
\newcommand{\mI}{{\bf I}}
\newcommand{\mM}{{\bf M}}
\newcommand{\mQ}{{\bf Q}}
\newcommand{\mS}{{\bf S}}
\newcommand{\mT}{{\bf T}}
\newcommand{\mX}{{\bf X}}
\newcommand{\mY}{{\bf Y}}
\newcommand{\sB}{{\mathbb B}}
\newcommand{\sN}{{\mathbb N}}
\newcommand{\sP}{{\mathbb P}}
\newcommand{\sR}{{\mathbb R}}
\newcommand{\sS}{{\mathbb S}}
\newcommand{\sW}{{\mathbb W}}
\newcommand{\sX}{{\mathbb X}}
\newcommand{\sY}{{\mathbb Y}}
\newcommand{\cF}{{\mathcal F}}
\newcommand{\cG}{{\mathcal G}}
\newcommand{\cH}{{\mathcal H}}
\newcommand{\wo}{w^\opt}
\newcommand{\wk}{w^{(\iter)}}
\newcommand{\wkp}{w^{(\iter+1)}}
\newcommand{\xo}{x^\opt}
\newcommand{\xn}{x^{(0)}}
\newcommand{\xk}{x^{(\iter)}}
\newcommand{\xkp}{x^{(\iter+1)}}
\newcommand{\xK}{x^{(\Iter)}}
\newcommand{\yo}{y^\opt}
\newcommand{\yn}{y^{(0)}}
\newcommand{\yk}{y^{(\iter)}}
\newcommand{\ykp}{y^{(\iter+1)}}
\newcommand{\ykm}{y^{(\iter-1)}}
\newcommand{\yK}{y^{(\Iter)}}
\newcommand{\yKm}{y^{(\Iter-1)}}
\newcommand{\yhkp}{\hat y^{(\iter+1)}}
\newcommand{\ybn}{\overline y^{(0)}}
\newcommand{\ybk}{\overline y^{(\iter)}}
\newcommand{\ybkp}{\overline y^{(\iter+1)}}
\newcommand{\tn}{\tau_{(0)}}
\newcommand{\tk}{\tau_{(\iter)}}
\newcommand{\tkp}{\tau_{(\iter+1)}}
\newcommand{\tK}{\tau_{(\Iter)}}
\newcommand{\thk}{\theta_{(\iter)}}
\newcommand{\thkm}{\theta_{(\iter-1)}}
\newcommand{\sn}{\sigma^{(0)}}
\newcommand{\sk}{\sigma^{(\iter)}}
\newcommand{\skp}{\sigma^{(\iter+1)}}
\newcommand{\stn}{\tilde \sigma_{(0)}}
\newcommand{\stk}{\tilde \sigma_{(\iter)}}
\newcommand{\stkp}{\tilde \sigma_{(\iter+1)}}
\newcommand{\stK}{\tilde \sigma_{(\Iter)}}
\newcommand{\Ek}{\E^{(\iter)}}
\newcommand{\Ekp}{\E^{(\iter+1)}}
\newcommand{\Ekm}{\E^{(\iter-1)}}
\newcommand{\Dn}{\Delta^{(0)}}
\newcommand{\Dk}{\Delta^{(\iter)}}
\newcommand{\Dkp}{\Delta^{(\iter+1)}}
\newcommand{\DK}{\Delta^{(\Iter)}}
\newcommand{\mAa}{\mA^\ast}
\newcommand{\mSn}{\mS_{(0)}}
\newcommand{\mSk}{\mS_{(\iter)}}
\newcommand{\mSkp}{\mS_{(\iter+1)}}
\newcommand{\mTk}{\mT_{(\iter)}}
\newcommand{\mYn}{\mY_{(0)}}
\newcommand{\mYk}{\mY_{(\iter)}}
\newcommand{\mYkp}{\mY_{(\iter+1)}}
\newcommand{\mYK}{\mY_{(\Iter)}}
\newcommand{\sSk}{\sS^{(\iter)}}
\newcommand{\sSkp}{\sS^{(\iter+1)}}
\newcommand{\new}[1]{#1}
\begin{document}

\maketitle

\begin{abstract}
We propose a stochastic extension of the primal-dual hybrid gradient algorithm studied by Chambolle and Pock in 2011 to solve saddle point problems that are separable in the dual variable. The analysis is carried out for general convex-concave saddle point problems and problems that are either partially smooth / strongly convex or fully smooth / strongly convex. We perform the analysis for arbitrary samplings of dual variables, and obtain known deterministic results as a special case.  Several  variants of our stochastic method significantly outperform the deterministic variant on a variety of imaging tasks.
\end{abstract}

\begin{keywords}
  convex optimization, primal-dual algorithms, saddle point problems, stochastic optimization, imaging
\end{keywords}

\begin{AMS}
  65D18, 65K10, 74S60, 90C25, 90C15, 92C55, 94A08
\end{AMS}

\section{Introduction}
Many modern problems in a variety of disciplines (imaging, machine learning, statistics, etc.) can be formulated as convex optimization problems. Instead of solving the optimization problems directly, it is often advantageous to reformulate the problem as a saddle point problem. A very popular algorithm to solve such saddle point problems is the primal-dual hybrid gradient (PDHG)\footnote{We follow the terminology of \cite{Chambolle2016} and call the algorithm simply PDHG. It corresponds to PDHGMu and PDHGMp in \cite{Esser2010}.} algorithm \cite{Pock2009,Esser2010,Chambolle2011,Pock2011,Chambolle2016,Chambolle2016a}. It has been used to solve a vast amount of state-of-the-art problems---to name a few examples in imaging: image denoising with the structure tensor \cite{Estellers2015}, total generalized variation denoising \cite{Bredies2015}, dynamic regularization \cite{Benning2016}, multi-modal medical imaging \cite{Knoll2016}, multi-spectral medical imaging \cite{Rigie2015}, computation of non-linear eigenfunctions \cite{Gilboa2015}, regularization with directional total generalized variation \cite{Kongskov2017}. Its popularity stems from two facts: First, it is very simple and therefore easy to implement. Second, it involves only simple operations like matrix-vector multiplications and evaluations of proximal operators which are for many problems of interest simple and in closed-form or easy to compute iteratively, cf. e.g. \cite{Parikh2014}. However, for large problems that are encountered in many real world applications, even these simple operations might be still too costly to perform too often.

We propose a stochastic extension of the PDHG for saddle point problems that are separable in the dual variable (cf. e.g. \cite{Dang2014, Zhang2015, Zhu2015, Peng2016}) where not all but only a few of these operations are performed in every iteration. Moreover, as in incremental optimization algorithms \cite{Tseng1998,Nedic2001,Blatt2007,Bertsekas2011,Bertsekas2011a,Schmidt2016,Oliviera2016} over the course of the iterations we continuously build up information from previous iterations which reduces variance and thereby negative effects of stochasticity. Non-uniform samplings \cite{Richtarik2014, Qu2016, Zhang2015, Qu2015, Allen-Zhu2016} have been proven very efficient for stochastic optimization. In this work we use the expected separable overapproximation framework of \cite{Qu2016, Qu2015, Richtarik2016} to prove all statements for all non-trivial and iteration-independent samplings.

\paragraph{Related Work} The proposed algorithm can be seen as a generalization of the algorithm of \cite{Dang2014, Zhu2015, Zhang2015} to arbitrary blocks and a much wider class of samplings. Moreover, in contrast to their results, our results generalize the deterministic case considered in \cite{Pock2009, Chambolle2011, Pock2011, Chambolle2016a}. Fercoq and Bianchi \cite{Fercoq2015} proposed a stochastic primal-dual algorithm with explicit gradient steps that allows for larger step sizes by averaging over previous iterates, however, this comes at the cost of prohibitively large memory requirements. Similar memory issues are encountered by a primal-dual algorithm of \cite{Balamurugan2016a}. It is related to forward-backward splitting \cite{Lions1979} and averaged gradient descent \cite{Blatt2007, Defazio2014} and therefore suffers the same memory issues as the averaged gradient descent. Moreover, Valkonen proposed a stochastic primal-dual algorithm that can exploit partial strong convexity of the saddle point functional \cite{Valkonen2016}. Randomized versions of the alternating direction method of multipliers are discussed for instance in \cite{Zhong2014, Gao2016}. In contrast to other works on stochastic primal-dual algorithms \cite{Pesquet2015, Wen2016}, our analysis is not based on Fej\'er monotonicity \cite{Combettes2015}. We therefore do not prove almost sure convergence of the sequence but prove a variety of convergence rates depending on strong convexity assumptions instead.

As a word of warning, our contribution should not be mistaken by other \enquote{stochastic} primal-dual algorithms, where errors in the computation of matrix-vector products and evaluation of proximal operators are modeled by random variables, cf. e.g. \cite{Pesquet2015, Combettes2015, Rosasco2015}. In our work we deliberately choose to compute only a subset of a whole iteration to save computational cost. These two notations are related but are certainly not the same.

\subsection{Contributions}
We briefly mention the main contributions of our work.

\paragraph{Generalization of Deterministic Case} The proposed stochastic algorithm is a direct generalization of the deterministic setting \cite{Pock2009, Chambolle2011, Pock2011, Chambolle2016, Chambolle2016a}. In the degenerate case where in every iteration all computations are performed, our algorithm coincides with the original deterministic algorithm. Moreover, the same holds true for our analysis of the stochastic algorithm where we recover almost all deterministic statements \cite{Chambolle2011, Pock2011} in this degenerate case. Therefore, the theorems for both the deterministic and the stochastic case can be combined by a single proof.

\paragraph{Better Rates} Our analysis extends the simple setting of \cite{Zhang2015} such that the strong convexity assumptions and the sampling do not have to be uniform. Even in the special case of uniform strong convexity and uniform sampling, the proven convergence rates are slightly better than the ones proven in \cite{Zhang2015}.

\paragraph{Arbitrary Sampling} The proposed algorithm is guaranteed to converge under a very general class of samplings \cite{Qu2016, Qu2015, Richtarik2016} and thereby generalizes also the algorithm of \cite{Zhang2015} which has only been analyzed for two specific samplings. As long as the sampling is independent and identically distributed over the iterations and all computations have non-zero probability to be carried out, the theory holds and the algorithm will converge with the proven convergence rates.

\paragraph{Acceleration} We propose an acceleration of the stochastic primal-dual algorithm which accelerates the convergence from $\mathcal O(1/K)$ to $\mathcal O(1/K^2)$ if parts of the saddle point functional are strongly convex and thereby results in a significantly faster algorithm.

\paragraph{Scaling Invariance} In the strongly convex case, we propose parameters for several serial samplings (uniform, importance, optimal), all based on the condition numbers of the problem and thereby independent of scaling.

\section{General Problem}
Let $\sX, \sY_i, i = 1, \ldots, n$ be real Hilbert spaces of any dimension and define the product space $\sY \De \prod_{i=1}^n \sY_i$. For $y\in \sY$, we shall write $y=(y_1,y_2,\dots,y_n)$, where $y_i \in \sY_i$. Further, we consider the natural inner product on the product space $\sY$ given by $\langle y, z \rangle = \sum_{i=1}^n \langle y_i, z_i \rangle$, where $y_i, z_i \in \sY_i$. This inner product induces the norm $\|y\|^2 = \sum_{i=1}^n \|y_i\|^2$. Moreover, for simplicity we will consider the space $\sW \De \sX \times \sY$ that combines both primal and dual variables.

Let $\mA : \sX \to \sY$ be a bounded linear operator. Due to the product space nature of $\sY$, we have $(\mA x)_i = \mA_i x$, where  $\mA_i : \sX \to \sY_i$ are linear operators. The adjoint of $\mA$ is given by $\mAa y = \sum_{i=1}^n \mAa_i y_i$. Moreover, let $f : \sY \to \sRI \De \sR \cup \{+\infty\}$ and $g : \sX \to \sRI$ be convex functions. In particular, we assume that $f$ is separable, i.e. $f(y) = \sum_{i=1}^n f_i(y_i)$.

Given the setup described above, we consider the optimization problem
\begin{align}
    \min_{x \in \sX} \left\{ \Phi(x) := \sum_{i=1}^n f_i(\mA_i x) + g(x) \right\} \, . \label{EQU:MINPROB}
\end{align}
Instead of solving \cref{EQU:MINPROB} directly, it is often desirable to reformulate the problem as a saddle point problem with the help of the Fenchel conjugate. If $f$ is proper, convex, and lower semi-continuous, then $f(y) = f^{\ast\ast}(y) = \sup_{z \in \sY} \langle z, y \rangle - f^\ast(z)$ where $f^\ast : \sY \to \sR \cup \{-\infty, +\infty\}$, $f^\ast(y) = \sum_{i=1}^n f_i^\ast(y_i)$ is the Fenchel conjugate of $f$ (and $f^{\ast\ast}$ its biconjugate defined as the conjugate of the conjugate). Then solving \cref{EQU:MINPROB} is equivalent to finding the primal part $x$ of a solution to the saddle point problem (called a saddle point)
\begin{align}
    \underset{x \in \sX}{\operatorname{min\phantom{p}\!\!\!}}
    \sup_{y \in \sY} \left\{ \Psi(x, y) \De \sum_{i=1}^n \Angle{\mA_i x}{y_i} - f_i^\ast(y_i) + g(x) \right\} \, . \label{EQU:SADDLE}
\end{align}
We will assume that the saddle point problem \cref{EQU:SADDLE} has a solution. For conditions for existence and uniqueness, we refer the reader to \cite{Bauschke2011}. A saddle point $\wo = (\xo, \yo) = (\xo, \yo_1, \ldots, \yo_n) \in \sW$ satisfies the optimality conditions
\begin{align*}
    \mA_i \xo \in \partial f^\ast_i(\yo_i) \quad i = 1, \ldots, n, \qquad
    -\mAa \yo \in \partial g(\xo) \, .
\end{align*}
An important notion in this work is \emph{strong convexity}. A functional $g$ is called $\mu_g$-convex if $g - \frac{\mu_g}2 \|\cdot\|^2$ is convex. In general, we assume that $g$ is $\mu_g$-convex, $f_i^\ast$ is $\mu_i$-convex with nonnegative strong convexity parameters $\mu_g, \mu_i \geq 0$. The convergence results in this contribution cover three different cases of regularity: i) no strong convexity $\mu_g, \mu_i = 0$, ii) semi strong convexity $\mu_g > 0$ or $\mu_i > 0$ and iii) full strong convexity $\mu_g, \mu_i > 0$. For notational convenience we make use of the operator $\mM \De \operatorname{diag}(\mu_1 \mI, \ldots, \mu_n \mI)$.

A very popular algorithm to solve the saddle point problem \cref{EQU:SADDLE} is the Primal-Dual Hybrid Gradient (PDHG) algorithm \cite{Pock2009,Esser2010, Chambolle2011, Pock2011, Chambolle2016, Chambolle2016a}. It reads (with extrapolation on $y$)
\begin{align*}
    \xkp &= \Prox{\tau}{g}{\xk - \tau \mAa \ybk} \\
    \ykp &= \Prox{\sigma}{f^\ast}{\yk + \sigma \mA \xkp} \\
    \ybkp &= \ykp + \ParamTheta\left(\ykp - \yk\right) \, ,
\end{align*}
where the \emph{proximal operator (or proximity / resolvent operator)} is defined as
\begin{align*}
    \Prox{\tau}{f}{y} \De \arg \min_{x \in \sX} \left\{ \frac12 \| x - y \|^2_{\tau^{-1}} + f(x) \right\}
\end{align*}
and the weighted norm by $\|x\|_{\tau^{-1}}^2 = \Angle{\tau^{-1} x}{x}$. Its convergence is guaranteed if the step size parameters $\sigma, \tau$ are positive and satisfy $\sigma \tau \|\mA\|^2 < 1, \theta = 1$ \cite{Chambolle2011}. Note that the definition of the proximal operator is well-defined for an \emph{operator-valued} step size $\tau$. In the case of a separable function $f$ and with operator-valued step sizes the PDHG algorithm takes the form
\begin{subequations}
\begin{align}
    \xkp &= \Prox{\mT}{g}{\xk - \mT \mAa \ybk} \label{EQU:PDHG:SEP:PRIMAL} \\
    \ykp_i &= \Prox{\ParamSigmaMat_i}{f^\ast_i}{\yk_i + \ParamSigmaMat_i \mA_i \xkp} \qquad i = 1, \ldots, n \label{EQU:PDHG:SEP:DUAL} \\
    \overline y^{(\iter+1)} &= \ykp + \ParamTheta\left(\ykp - \yk\right) \, .
\end{align}
\end{subequations}
Here the step size parameters $\ParamSigmaMat = \operatorname{diag}(\ParamSigmaMat_1, \ldots, \ParamSigmaMat_n)$ (a block diagonal operator), $\ParamSigmaMat_1, \ldots, \ParamSigmaMat_n$ and $\ParamTauMat$ are symmetric and positive definite. The algorithm is guaranteed to converge if $\|\ParamSigmaMat^{1/2} \mA \ParamTauMat^{1/2}\| < 1$ and $\ParamTheta = 1$ \cite{Pock2011}.

\section{Algorithm}
In this work we extend the PDHG algorithm to a stochastic setting where in each iteration we update a random subset $\sS$ of the dual variables \cref{EQU:PDHG:SEP:DUAL}. This subset is sampled in an i.i.d. fashion from a fixed but otherwise {\em arbitrary} distribution, whence the name \enquote{arbitrary sampling}. In order to guarantee convergence, it is necessary to assume that the sampling is \enquote{proper} \cite{PCDM, Qu2015}. A sampling is proper if  for each dual variable $i$ we have $i \in \sS$ with a positive probability $p_i>0$. Examples of proper samplings include the \emph{full sampling} where $\sS = \{1, \ldots, n\}$ with probability 1 and serial sampling where $\sS = \{i\}$ is chosen with probability $p_i$. It is important to note that also other samplings are admissible. For instance for $n = 3$, consider the sampling that selects $\sS = \{1,2\}$ with probability $1/3$ and $\mathbb S=\{2,3\}$ with probability $2/3$. Then the probabilities for the three blocks are $p_1 = 1/3$, $p_2 = 1$ and $p_3 = 2/3$ which makes it a proper sampling. However, if only $\sS = \{1,2\}$ is chosen with probability 1, then this sampling is not proper as the probability for the third block is zero: $p_3 = 0$.

\begin{algorithm}
\caption{Stochastic Primal-Dual Hybrid Gradient algorithm (\textbf{SPDHG}). \newline \textbf{Input:} $\xn, \yn$, $\mS = \operatorname{diag}(\mS_1, \ldots, \mS_n)$, $\mT, \theta$, $\sSk$, $\Iter$. \textbf{Initialize:} $\ybn = \yn$} \label{ALG:PDHG}
\begin{algorithmic}
    \FOR{$\iter = 0, \ldots, \Iter-1$}
    \STATE{$\xkp = \Prox{\mT}{g}{\xk - \mT \mAa \ybk}$}
    \STATE{Select $\sSkp \subset \{1, \ldots, n\}$.}
    \STATE{$\ykp_i = \begin{cases} \Prox{\mS_i}{f^\ast_i}{\yk_i + \mS_i \mA_i \xkp} & \text{if $i \in \sSkp$} \\ \yk_i & \text{else} \end{cases}$}
    \STATE{$\ybkp = \ykp + \theta \mQ \left(\ykp - \yk\right)$}
    \ENDFOR
\end{algorithmic}
\end{algorithm}

The algorithm we propose is formalized as \cref{ALG:PDHG}. As in the original PDHG, the step size parameters $\mT, \mS_i$ have to be self-adjoint and positive definite operators for the updates to be well-defined. The extrapolation is performed with a scalar $\theta > 0$ and an operator \new{$\mQ \De \operatorname{diag}(p_1^{-1} \mI, \ldots, p_n^{-1} \mI)$} of probabilities $p_i$ that an index is selected in each iteration.

\begin{remark}
Both, the primal and dual iterates $\xk$ and $\yk$ are random variables but only the dual iterate $\yk$ depends on the sampling $\sSk$. However, $\xk$ depends of course on all previous samplings $\sS^{(i)}, i < k$.
\end{remark}

\begin{remark}
Due to the sampling each iteration requires both $\mA_i$ and $\mAa_i$ to be evaluated only for each selected index $i \in \sSkp$. To see this, note that
\begin{align*}
\mAa \overline y^{(\iter+1)}
&= \mAa \yk + \sum_{i \in \sSkp} \left(1 + \frac{\theta}{p_i} \right) \mAa_i \left(\ykp_i - \yk_i\right)
\end{align*}
where $\mAa \yk$ can be stored from the previous iteration (needs the same memory as the primal variable $x$) and the operators $\mAa_i$ are evaluated only for $i \in \sSkp$.
\end{remark}

\section{General Convex Case}
We first analyze the convergence of \cref{ALG:PDHG} in the general convex case without making use of any strong convexity or smoothness assumptions. In order to analyze the convergence for the large class of samplings described in the previous section we make use of the \emph{expected separable overapproximation (ESO)} inequality  \cite{Qu2015}.
\begin{definition}[Expected Separable Overapproximation (ESO) \cite{Qu2015}]
Let $\sS \subset \{1, \ldots, n\}$ be a random set and $p_i \De \mathbb P(i \in \sS)$ the probability that $i$ is in $\sS$. We say that $\{v_i\} \subset \sR^n$ fulfill the \emph{ESO inequality} (depending on $\mC = [\mC_1; \ldots; \mC_n], \mC^\ast z = \sum_{i=1}^n C_i^\ast z_i$ and $\sS$) if for all $z \in \sY$ it holds that
\begin{align}
\E_\sS \left\|\sum_{i \in \sS} \mC_i^\ast z_i \right\|^2 \leq \sum_{i=1}^n p_i v_i \|z_i\|^2 \, . \label{EQU:ESO}
\end{align}
Such parameters $\{v_i\}$ are called \emph{ESO parameters}.
\end{definition}

\begin{remark}
Note that for any bounded linear operator $\mC$ such parameters always exist but are obviously not unique. For the efficiency of the algorithm it is desirable to find ESO parameters such that \cref{EQU:ESO} is as tight as possible; i.e., we want the parameters  $\{v_i\}$ be small. As we shall see, the ESO parameters influence the choice of the extrapolation parameter $\theta$ in the strongly convex case.
\end{remark}

The ESO inequality was first proposed by Richt\'{a}rik and Tak\'{a}\v{c}~\cite{PCDM} to study parallel coordinate descent methods in the context of {\em uniform} samplings, which are samplings for which $p_i = p_j$ for all $i, j$. Improved bounds for ESO parameters were obtained in \cite{APPROX} and used in the context of accelerated coordinate descent. Qu et al.~\cite{Qu2015} perform an in-depth study of ESO parameters. The ESO inequality is also critical in the study mini-batch stochastic gradient descent with \cite{mS2GD} or without \cite{MinibatchSGD} variance reduction.

We will frequently need to estimate the expected value of inner products which we will do by means of ESO parameters. Recall that we defined weighted norms as $\|x\|_{\mT^{-1}}^2 \De \langle \mT^{-1} x, x \rangle$. The proof of this lemma can be found in the appendix.
\begin{lemma} \label{LEM:ESO}
Let $\sS \subset \{1, \ldots, n\}$ be a random set and $y^+_i = \hat y_i$ if $i \in \sS$ and $y_i$ otherwise. Moreover, let $\{v_i\}$ be some ESO parameters of \new{$\mS^{1/2} \mA \mT^{1/2}$} and $p_i = \sP(i \in \sS)$. Then for any $x \in \sX$ and $c > 0$
\begin{align*}
  2 \E_{\sS} \Angle{\mQ \mA x}{y^+ - y} \geq - \E_{\sS} \left\{ \frac1c \|x\|_{\mT^{-1}}^2 + c \max_i \frac{v_i}{p_i} \|y^+ - y\|_{\mQ \mS^{-1}}^2 \right\} \, .
\end{align*}
\end{lemma}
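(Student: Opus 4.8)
The plan is to reduce the bilinear expression to a sum over the single random set $\sS$, then apply a Cauchy--Schwarz/Young-type bound before invoking the ESO inequality. First I would unpack the definitions. Since $\mQ = \operatorname{diag}(p_1^{-1}\mI, \ldots, p_n^{-1}\mI)$ and $y^+_i - y_i$ vanishes unless $i \in \sS$, the inner product $\Angle{\mQ \mA x}{y^+ - y}$ collapses to
\[
\Angle{\mQ \mA x}{y^+ - y} = \sum_{i \in \sS} \frac{1}{p_i} \Angle{\mA_i x}{\hat y_i - y_i}.
\]
My strategy is to symmetrize by introducing the step-size operators: write $\frac{1}{p_i}\Angle{\mA_i x}{\hat y_i - y_i}$ in the form $\Angle{\mS_i^{1/2}\mA_i\mT^{1/2}\,(\mT^{-1/2}x)}{\tfrac{1}{p_i}\mS_i^{-1/2}(\hat y_i - y_i)}$, so that the operator appearing is exactly $\mS^{1/2}\mA\mT^{1/2}$, whose ESO parameters $\{v_i\}$ are the ones hypothesized. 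This rewriting is the key bookkeeping step that makes the ESO inequality applicable.

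Next I would apply the elementary inequality $2\Angle{a}{b} \geq -\tfrac1c\|a\|^2 - c\|b\|^2$ for any $c>0$, but applied in the aggregated form after passing to expectation. Concretely, taking $\E_\sS$ of the sum and using Cauchy--Schwarz on the paired terms, I would bound
\[
2\,\E_\sS \sum_{i \in \sS} \frac{1}{p_i}\Angle{\mA_i x}{\hat y_i - y_i} \geq -\frac1c \,\E_\sS\Bigl\|\sum_{i\in\sS}\mS_i^{1/2}\mA_i\mT^{1/2}(\mT^{-1/2}x)\Bigr\|^2 - c\,\E_\sS\sum_{i\in\sS}\frac{1}{p_i^2}\|\hat y_i - y_i\|_{\mS_i^{-1}}^2.
\]
The first term is then controlled directly by the ESO inequality \cref{EQU:ESO} applied to $z_i = \mT^{-1/2}x$ (so $z_i$ is the same for every $i$), giving $\E_\sS\|\sum_{i\in\sS}\mS_i^{1/2}\mA_i\mT^{1/2} z\|^2 \leq \sum_i p_i v_i \|z\|^2 = \bigl(\sum_i p_i v_i\bigr)\|x\|_{\mT^{-1}}^2$. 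The subtlety here is that the ESO bound produces $\sum_i p_i v_i$ rather than the stated $\max_i v_i/p_i$; I expect the intended route is instead to keep $z_i$ block-dependent and route the $x$-dependence through a single $\|x\|_{\mT^{-1}}^2$, while the factor $\max_i v_i/p_i$ is extracted from the second term.

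The cleaner organization, which I would actually follow, is to split the coefficient asymmetrically: write $\tfrac1{p_i} = \tfrac{1}{p_i^{1/2}}\cdot\tfrac{1}{p_i^{1/2}}$ and absorb one factor into each side of Young's inequality so that the $y^+-y$ side carries the weight $\mQ\mS^{-1}$ exactly, producing $\|y^+-y\|_{\mQ\mS^{-1}}^2 = \sum_{i\in\sS}\tfrac1{p_i}\|\hat y_i - y_i\|_{\mS_i^{-1}}^2$. The remaining mismatch between the per-block weight $v_i/p_i$ and the clean norm is handled by pulling out $\max_i v_i/p_i$ before summing, which is precisely why that maximum appears. The main obstacle I anticipate is getting this weight-splitting to line up so that exactly $\frac1c\|x\|_{\mT^{-1}}^2$ and $c\max_i(v_i/p_i)\|y^+-y\|_{\mQ\mS^{-1}}^2$ emerge, rather than some block-weighted variant; tracking the $p_i$ powers carefully through the ESO application is the delicate part, whereas the inequalities themselves (Cauchy--Schwarz, Young, ESO) are routine. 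Finally I would note that the whole computation can be done inside the conditional expectation given the history up to step $k$, since $x$ is measurable with respect to that history and only $\sS$ is fresh randomness, which is why everything is stated under $\E_\sS$.
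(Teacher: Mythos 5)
Your reduction of the inner product and your parenthetical guess about the intended route are exactly right --- the paper does precisely what you describe there: with $\mC_i \De \mS_i^{1/2}\mA_i\mT^{1/2}$ it writes $\Angle{\mQ\mA x}{y^+-y} = \bigl\langle \mT^{-1/2}x, \sum_{i\in\sS}\mC_i^\ast z_i \bigr\rangle$ for the \emph{block-dependent, deterministic} vectors $z_i \De p_i^{-1}\mS_i^{-1/2}(\hat y_i-y_i)$, applies Young's inequality pathwise (for each realization of $\sS$) to get $-c\|x\|^2_{\mT^{-1}} - \tfrac1c\bigl\|\sum_{i\in\sS}\mC_i^\ast z_i\bigr\|^2$ (the relabeling $c\leftrightarrow 1/c$ is immaterial since $c>0$ is arbitrary), then takes $\E_\sS$ and bounds $\E_\sS\bigl\|\sum_{i\in\sS}\mC_i^\ast z_i\bigr\|^2 \leq \sum_i p_i v_i\|z_i\|^2 \leq \max_i(v_i/p_i)\sum_i p_i^2\|z_i\|^2$ by the ESO inequality \cref{EQU:ESO}, closing with the identity $\sum_i p_i^2\|z_i\|^2 = \E_\sS\|y^+-y\|^2_{\mQ\mS^{-1}}$. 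However, neither of the two routes you actually write down closes. In your displayed Young step the pairing is backwards: the term $\E_\sS\bigl\|\sum_{i\in\sS}\mS_i^{1/2}\mA_i\mT^{1/2}(\mT^{-1/2}x)\bigr\|^2$ sums vectors that live in different spaces $\sY_i$, and the ESO inequality only controls $\E_\sS\bigl\|\sum_{i\in\sS}\mC_i^\ast z_i\bigr\|^2$ for block vectors $z_i\in\sY_i$; \enquote{applying ESO with $z_i=\mT^{-1/2}x$} is a type error, since $\mT^{-1/2}x\in\sX$. The randomness-carrying sum must be built from the adjoints acting on the dual increments, with $x$ extracted once as a single $\sX$-vector.

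The fallback you commit to (per-block Young with the split $p_i^{-1}=p_i^{-1/2}\cdot p_i^{-1/2}$) has a hole exactly at the spot you flag as delicate. After taking $\E_\sS$, that organization becomes fully deterministic: $2\,\E_\sS\Angle{\mQ\mA x}{y^+-y} = 2\Angle{\mA x}{\hat y - y}$ and $\E_\sS\|y^+-y\|^2_{\mQ\mS^{-1}} = \|\hat y - y\|^2_{\mS^{-1}}$, so the ESO inequality --- a statement about the second moment of a random sum --- is never invoked, and the lemma reduces to the deterministic operator bound $\|\mS^{1/2}\mA\mT^{1/2}\|^2 \leq \max_i v_i/p_i$. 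That bound is in fact true, but proving it from the ESO requires a separate argument your proposal neither states nor supplies: Jensen's inequality applied to \cref{EQU:ESO} gives $\bigl\|\sum_i p_i\mC_i^\ast z_i\bigr\|^2 \leq \sum_i p_i v_i\|z_i\|^2$, and the choice $z_i = p_i^{-1}\mC_i u$ then yields $\|\mC u\|^2 \leq \max_i (v_i/p_i)\,\|u\|^2$. Simply \enquote{pulling out $\max_i v_i/p_i$ before summing} on the $y$-side cannot produce this, and the cruder per-block consequence $\|\mC_i\|^2\leq v_i$ only gives the constant $\sum_i v_i$, which exceeds $\max_i v_i/p_i$ for non-serial samplings. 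The paper's ordering --- pathwise Young first, expectation and ESO second, with the whole random aggregate kept on the dual side --- is the missing idea that makes the stated constant come out.
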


\begin{example}[Full Sampling]
Let $\sS = \{1, \ldots, n\}$ with probability 1 such that $p_i = \mathbb P(i \in \sS) = 1$. Then $v_i = \|\mS^{1/2} \mA \mT^{1/2}\|^2$ are ESO parameters of $\mS^{1/2} \mA \mT^{1/2}$. Thus, the deterministic condition on convergence, $\|\mS^{1/2} \mA \mT^{1/2}\|^2 < 1$, implies a bound on some ESO parameters $v_i < p_i$.
\end{example}

\begin{example}[Serial Sampling] \label{EXA:SERIAL}
Let $\sS = \{i\}$ be chosen with probability $p_i > 0$. Then $v_i = \|\mS_i^{1/2} \mA_i \mT^{1/2}\|^2$ are ESO parameters of $\mS^{1/2} \mA \mT^{1/2}$.
\end{example}

\new{The analysis for the general convex case will use the notation of \emph{Bregman distance} which is defined for any function $f : \sX \to \sRI$, $x, y \in \sX$ and $q \in \partial f(y)$ in the subdifferential of $f$ at $y$ as
$$D_f^{q}(x, y) \De f(x) - f(y) - \langle q, x - y \rangle \, .$$
Next to Bregman distances, one can measure optimality by the partial primal-dual gap. Let $\sB_1 \times \sB_2 \subset \sW = \sX \times \sY$, then we define the partial primal-dual gap as
\begin{align*}
    G_{\sB_1 \times \sB_2}(x, y) \De \sup_{\tilde y \in \sB_2} \Psi(x, \tilde y) - \inf_{\tilde x \in \sB_1} \Psi(\tilde x, y) \, .
\end{align*}
It is convenient to define $\sB \De \sB_1 \times \sB_2 \subset \sW$ and to denote the gap as $G_{\sB}(w) \De G_{\sB_1 \times \sB_2}(x, y).$ Note that if $\sB$ contains a saddle point $\wo = (\xo, \yo)$, then we have that
\begin{align*}
    G_{\sB}(w)
    \geq \Psi(x, \yo) - \Psi(\xo, y)
    = D_g^{-\mAa y^\opt}(x, \xo) + D_{f^\ast}^{\mA \xo}(y, \yo) = D_h^q(w, \wo) \geq 0
\end{align*}
where the first equality is obtained by adding a zero and we used $h(w) \De g(x) + f^\ast(y)$ and $q \De (-\mAa y^\opt, \mA x^\opt) \in \partial h(w^\opt)$ for the last equality. The non-negativity stems from the fact that Bregman distances of convex functionals are non-negative and $h$ is convex indeed.

We will make frequent use of the following \enquote{distance functions}
$$\cF_i(y_i | \tilde x, \tilde y_i) \De f_i^\ast(y_i) - f_i^\ast(\tilde y_i) - \langle \mA_i \tilde x, y_i - \tilde y_i \rangle$$
and $\cF(y | \tilde w) \De \sum_{i=1}^n \mathcal F_i(y_i | \tilde x, \tilde y_i) \, .$
Note that these are strongly related to Bregman distances; if $\wo$ is a saddle point, then $\mathcal F(y | \wo) = D_{f^\ast}^{\mA \xo}(y, \yo)$ is the Bregman distance of $f^\ast$ between $y$ and $y^\opt$. Similarly, we make use of the weighted distance
$$\cF^p(y | \tilde w) \De \sum_{i=1}^n \left(\frac{1}{p_i} - 1\right)\cF_i(y_i | \tilde x, \tilde y_i)$$
and the distance for the primal functional
$\cG(x | \tilde w) \De g(x) - g(\tilde x) - \langle -\mAa \tilde y, x - \tilde x \rangle.$
We note that these distances are also related to the partial primal-dual gap as
$$G_{\sB}(w) = \sup_{\tilde w \in \sB} \cG(x | \tilde w) + \cF(y | \tilde w) = \sup_{\tilde w \in \sB} \cH(w | \tilde w) \, .$$

\begin{theorem} \label{THE:NONSTRCVX}
Let $g, f_i, i = 1, \ldots, n$ be convex, $\theta = 1$ and $\mT, \mS$ be chosen so that there exist ESO parameters $\{v_i\}$ of $\mS^{1/2} \mA \mT^{1/2}$ with
\begin{align}
v_i < p_i \qquad i = 1, \ldots, n \, . \label{EQU:THM1:ESOPARAM}
\end{align}

Then, the Bregman distance between iterates of \cref{ALG:PDHG} $\wk = (\xk, \yk) \in \sW$ and any saddle point $\wo \in \sW$ converges to zero almost surely,
\begin{align}
D^q_h(\wk, \wo) \rightarrow 0 \quad \text{a.s.}  \label{EQU:THM1:ASCONV}%
\end{align}

Moreover, the ergodic sequence $w_{(\Iter)} \De \frac1\Iter \sum_{\iter=1}^\Iter \wk$ converges with rate $1/\Iter$ in an expected partial primal-dual gap sense, i.e. for any set $\sB \De \sB_1 \times \sB_2 \subset \sW$ it holds
\begin{align}
\E \, G_{\sB}(w_{(\Iter)}) \leq \frac{C_\sB}{\Iter} \label{EQU:THM1:ERGGAP}
\end{align}
where the constant is given by
\begin{align}
C_\sB \De \sup_{x \in \sB_1} \frac12 \|\xn - x\|_{\mT^{-1}}^2 + \sup_{y \in \sB_2} \frac12 \|\yn - y\|^2_{\mQ \mS^{-1}} + \sup_{w \in \sB} \cF^p(\yn | w) \, . \label{EQU:THM1:CONST}
\end{align}
The same rate holds for the expected Bregman distance, $\E D_h^q(w_{(\Iter)}, \wo) \leq C_{\{\wo\}} / \Iter$.
\end{theorem}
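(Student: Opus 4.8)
The plan is to run the primal--dual energy argument of Chambolle--Pock in conditional expectation, exploiting that the inverse-probability weight $\mQ=\diag(p_1^{-1}\mI,\dots,p_n^{-1}\mI)$ in the extrapolation and in the dual norm is engineered to absorb the randomness of the sampling. First I would read off the two proximal inequalities from the optimality conditions of the updates in \cref{ALG:PDHG}. Writing the primal step as the minimiser of $\tfrac12\|\cdot-(\xk-\mT\mAa\ybk)\|_{\mT^{-1}}^2+g$ and using the three-point identity gives, for every $\tilde w=(\tilde x,\tilde y)$,
\begin{align*}
\cG(\xkp\mid\tilde w)+\Angle{\ybk-\tilde y}{\mA(\xkp-\tilde x)}\leq\tfrac12\|\xk-\tilde x\|_{\mT^{-1}}^2-\tfrac12\|\xkp-\tilde x\|_{\mT^{-1}}^2-\tfrac12\|\xkp-\xk\|_{\mT^{-1}}^2 .
\end{align*}
The same computation for each updated block $i\in\sSkp$ (and the identity $\ykp_i=\yk_i$ for $i\notin\sSkp$) produces the matching dual inequality for $\cF_i(\ykp_i\mid\tilde w)$, carrying the cross term $-\Angle{\mA_i(\xkp-\tilde x)}{\ykp_i-\tilde y_i}$ and the contraction $-\tfrac12\|\ykp_i-\yk_i\|_{\mS_i^{-1}}^2$.

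Next I would weight the dual inequality by $p_i^{-1}$, sum over $i\in\sSkp$, and take the conditional expectation $\E_{\sSkp}[\,\cdot\,]$ given the history (observe that $\xkp$ and $\ybk$ are already determined before $\sSkp$ is drawn). Two elementary per-block computations are the heart of this step. The quadratic dual terms reassemble, thanks to the cancellation $\E_{\sSkp}[\mathbf 1_{i\in\sSkp}]\,p_i^{-1}=1$, into $\tfrac12\|\yk-\tilde y\|_{\mQ\mS^{-1}}^2-\tfrac12\E_{\sSkp}\|\ykp-\tilde y\|_{\mQ\mS^{-1}}^2-\tfrac12\E_{\sSkp}\|\ykp-\yk\|_{\mQ\mS^{-1}}^2$; while the distance terms of the un-updated blocks repackage into the weighted distance, producing $\E_{\sSkp}\cF(\ykp\mid\tilde w)+\E_{\sSkp}\cF^p(\ykp\mid\tilde w)-\cF^p(\yk\mid\tilde w)$. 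This is exactly why $\cF^p$ appears both in the energy and in the constant \cref{EQU:THM1:CONST}. Substituting the extrapolation $\ybk=\yk+\mQ(\yk-\ykm)$ (recall $\theta=1$) into the primal cross term and adding it to the expected dual cross term, the pieces involving $\yk-\tilde y$ cancel and one is left with a telescoping part plus a residual product of the form $\Angle{\mQ\mA(\xkp-\xk)}{\ykp-\yk}$; the momentum $\mQ(\yk-\ykm)$ telescopes between consecutive iterations up to a single boundary term at step $\Iter$.

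To close the recursion I would bound the residual product by \cref{LEM:ESO}. Applied with $x=\xkp-\xk$ (deterministic given the history) and $y^+-y=\ykp-\yk$ (the fresh sampled increment) and a free constant $c>0$, it controls $\Angle{\mQ\mA(\xkp-\xk)}{\ykp-\yk}$ from below by $-\tfrac1{2c}\|\xkp-\xk\|_{\mT^{-1}}^2-\tfrac{c}{2}\max_i\tfrac{v_i}{p_i}\,\E_{\sSkp}\|\ykp-\yk\|_{\mQ\mS^{-1}}^2$. Since the hypothesis $v_i<p_i$ gives $\max_i v_i/p_i<1$, I may pick $c$ with $1<c<(\max_i v_i/p_i)^{-1}$, so that both coefficients are strictly below $\tfrac12$ and the residual is absorbed into the contractions $\tfrac12\|\xkp-\xk\|_{\mT^{-1}}^2$ and $\tfrac12\E_{\sSkp}\|\ykp-\yk\|_{\mQ\mS^{-1}}^2$ with a strictly positive remainder (the boundary term at $\Iter$ is absorbed similarly into the final energy). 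With the energy
\begin{align*}
V^{(\iter)}\De\tfrac12\|\xk-\tilde x\|_{\mT^{-1}}^2+\tfrac12\|\yk-\tilde y\|_{\mQ\mS^{-1}}^2+\cF^p(\yk\mid\tilde w)
\end{align*}
(augmented by the momentum boundary term), the one-step estimate reads $\E_{\sSkp}[V^{(\iter+1)}]+\E_{\sSkp}[\cH(\wkp\mid\tilde w)]\leq V^{(\iter)}$ up to nonnegative remainders.

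Finally I would take full expectations through the tower property and telescope from $\iter=0$ to $\Iter-1$, obtaining $\E V^{(\Iter)}+\sum_{\iter=1}^\Iter\E\,\cH(\wk\mid\tilde w)\leq V^{(0)}$, where $V^{(0)}$ matches \cref{EQU:THM1:CONST} term by term so that $\sup_{\tilde w\in\sB}V^{(0)}=C_\sB$. Convexity of $\cH(\cdot\mid\tilde w)=\cG(\cdot\mid\tilde w)+\cF(\cdot\mid\tilde w)$ in $w$ lets Jensen's inequality pull the ergodic average inside, and taking the supremum over $\tilde w\in\sB$ yields \cref{EQU:THM1:ERGGAP}; the choice $\sB=\{\wo\}$ together with $\cH(w\mid\wo)=D_h^q(w,\wo)\geq0$ gives the expected Bregman rate. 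For the almost sure statement \cref{EQU:THM1:ASCONV} I take $\tilde w=\wo$, where $\cF^p(\cdot\mid\wo)\geq0$ and $\cH(\cdot\mid\wo)=D_h^q(\cdot,\wo)\geq0$: the telescoped bound gives $\sum_\iter\E\,D_h^q(\wk,\wo)<\infty$, hence $\sum_\iter D_h^q(\wk,\wo)<\infty$ almost surely and therefore $D_h^q(\wk,\wo)\to0$ a.s. The main obstacle is the stochastic bookkeeping of the middle step: checking that the weight $\mQ$ in both the extrapolation and the dual norm exactly neutralises the probabilities $p_i$, so that the un-updated blocks collapse into $\cF^p$ and the momentum cross term telescopes in expectation, all while respecting that $\xkp$ is fixed before $\sSkp$ is sampled. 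Once the correct energy $V^{(\iter)}$ and constant $c$ are identified, the ESO step, the telescoping, and the Jensen argument are routine.
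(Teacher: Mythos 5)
Your overall strategy is exactly the paper's: derive the two prox inequalities, convert to the stochastic setting via the $\mQ$-weighted norms and the weighted distance $\cF^p$ (your identity $\cF(\yhkp|w)=\Ekp\cF^p(\ykp|w)-\cF^p(\yk|w)+\Ekp\cF(\ykp|w)$ is literally \cref{EQU:THM1:F}), substitute the extrapolation, absorb the residual via \cref{LEM:ESO}, telescope, and finish with Jensen. However, there is a concrete error in the middle step that you yourself flag as the crux. The combined cross term at iteration $\iter$ is $-\Angle{\mQ\mA(\xkp-\tilde x)}{\ykp-\yk}+\Angle{\mQ\mA(\xkp-\tilde x)}{\yk-\ykm}$, and no rearrangement of the telescoping sum produces your same-step residual $\Angle{\mQ\mA(\xkp-\xk)}{\ykp-\yk}$: pairing the negative term at iteration $\iter$ with the positive term at iteration $\iter+1$ always leaves a product of the \emph{new} primal increment with the \emph{previous} dual increment, i.e. $\Angle{\mQ\mA(\xkp-\xk)}{\yk-\ykm}$ as in \cref{EQU:THM1:IP}. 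This matters for two reasons. First, measurability: in the correct residual the masked dual increment $\yk-\ykm$ is governed by $\sSk$, and $\xkp-\xk$ is \emph{not} deterministic with respect to $\sSk$ (it depends on $\sSk$ through $\ybk$), so your clean application of \cref{LEM:ESO} "with $x$ deterministic given the history and the fresh sampled increment" is not the one actually needed; one must apply the lemma with respect to $\sSk$, which works only because the lemma keeps the expectation on the $\frac1c\|x\|^2_{\mT^{-1}}$ term (the Young step being pointwise), as in \cref{EQU:THM1:ESO}. Second, bookkeeping: the dual contraction $\|\yk-\ykm\|^2_{\mQ\mS^{-1}}$ used for absorption is generated at iteration $\iter-1$, so it must be carried inside the Lyapunov quantity to the next iteration (this is why the paper's $\Dk$ contains both $\|\yk-\ykm\|^2_{\mQ\mS^{-1}}$ and the cross term), rather than consumed at the same step as you propose. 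Your choice $1<c<(\max_i v_i/p_i)^{-1}$ is fine and mirrors the paper's effective choice.

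A second, smaller gap: to pass from $\E V^{(\Iter)}+\sum_{\iter=1}^{\Iter}\E\,\cH(\wk|\tilde w)\leq V^{(0)}$ to the gap bound you implicitly discard $\E V^{(\Iter)}$, but for a general $\tilde w\in\sB$ that is not a saddle point the term $\cF^p(\yK|\tilde w)$ inside $V^{(\Iter)}$ can be negative, since $\mA_i\tilde x$ need not be a subgradient of $f_i^\ast$ at $\tilde y_i$ (take $f_i^\ast$ affine). The paper repairs exactly this with the identity $\cF^p(\yn|w)-\cF^p(\yK|w)=\cF^p(\yn|x,\yK)$, bounded by the third supremum in \cref{EQU:THM1:CONST}; your argument is valid as written only for $\tilde w=\wo$, which suffices for \cref{EQU:THM1:ASCONV} and the Bregman rate but not for \cref{EQU:THM1:ERGGAP}. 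Both defects are repairable, and the repaired proof coincides with the paper's; but as stated, the telescoping identity and the final discard are wrong.
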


\begin{remark}
The convergence \cref{EQU:THM1:ASCONV} in Bregman distance implies convergence in norm as soon as $h$ is strictly convex. If $h$ is not strictly convex, then the convergence has to be seen in a more generalized sense. For example, if $h$ is a $\ell^1$-norm (and thus not strictly convex), then the Bregman distance between $\wk$ and $\wo$ is zero if and only if they have the same support and sign. Thus, the convergence statement is related to the support and sign of $\wo$. In the extreme case $h \equiv 0$, then $D_h^q(\cdot, \wo) \equiv 0$ and the convergence statement has no meaning.
\end{remark}

The proof of this theorem utilizes a standard inequality for which we provide the proof in the appendix for completeness.
\begin{lemma} \label{LEM:STD}
Consider the deterministic updates
\begin{align*}
\xkp &= \Prox{\mTk}{g}{\xk - \mTk \mAa \ybk} \\
\yhkp_i &= \Prox{[\mSk]_i}{f^\ast_i}{\yk_i + [\mSk]_i \mA_i \xkp} \qquad i = 1, \ldots, n
\end{align*}
with iteration varying step sizes $\mTk$ and $\mSk = \operatorname{diag}([\mSk]_1, \ldots, [\mSk]_n)$. Then for any $(x, y) \in \sW$ it holds that
\begin{align*}
& \quad \; \|\xk - x\|_{\mTk^{-1}}^2 + \|\yk - y\|^2_{\mSk^{-1}} \\
&\geq \|\xkp - x\|_{\mTk^{-1} + \mu_g\mI}^2 + \|\yhkp - y\|^2_{\mSk^{-1} + \mM} \\
&\qquad + 2 \left(\cG(\xkp | w) + \cF(\yhkp | w) \right) - 2 \langle \mA(\xkp - x), \yhkp - \ybk \rangle \\
&\qquad + \|\xkp - \xk\|_{\mTk^{-1}}^2 + \|\yhkp - \yk\|^2_{\mSk^{-1}}  \, .
\end{align*}
\end{lemma}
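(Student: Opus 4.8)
The plan is to prove the deterministic one-step inequality of Lemma~\ref{LEM:STD} by the standard device of reading off the optimality conditions of the two proximal minimizations, expressing them via the three-point identity (the cosine rule) for the weighted norms, and then combining the resulting two inequalities. This is the workhorse estimate behind the convergence of PDHG, so the proof is a careful bookkeeping exercise rather than a conceptual leap.

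\emph{First I would exploit the primal update.} Since $\xkp = \Prox{\mTk}{g}{\xk - \mTk \mAa \ybk}$, the minimality condition of the defining problem $\min_x \tfrac12\|x-(\xk-\mTk\mAa\ybk)\|_{\mTk^{-1}}^2 + g(x)$ gives $\mTk^{-1}(\xk - \xkp) - \mAa\ybk \in \partial g(\xkp)$. Testing this subgradient against $x - \xkp$ and using $\mu_g$-convexity of $g$, I would obtain
\begin{align*}
g(x) \geq g(\xkp) + \langle \mTk^{-1}(\xk - \xkp) - \mAa\ybk, x - \xkp\rangle + \tfrac{\mu_g}{2}\|x - \xkp\|^2 \, .
\end{align*}
Rewriting the inner product with the polarization/three-point identity $\langle \mTk^{-1}(\xk-\xkp), x-\xkp\rangle = \tfrac12(\|\xk-x\|_{\mTk^{-1}}^2 - \|\xkp-x\|_{\mTk^{-1}}^2 + \|\xkp-\xk\|_{\mTk^{-1}}^2)$ converts this into a statement about the squared distances appearing in the claim, with the term $-\langle\mAa\ybk, x-\xkp\rangle = \langle \mA(\xkp-x),\ybk\rangle$ producing (part of) the cross term and $\cG(\xkp\mid w)$ emerging from $g(x)-g(\xkp)+\langle\mAa\tilde y, x-\xkp\rangle$ after matching the definition of $\cG$.

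\emph{Next I would repeat the same argument block-wise for the dual.} From $\yhkp_i = \Prox{[\mSk]_i}{f_i^\ast}{\yk_i + [\mSk]_i\mA_i\xkp}$ I get $[\mSk]_i^{-1}(\yk_i - \yhkp_i) + \mA_i\xkp \in \partial f_i^\ast(\yhkp_i)$, and $\mu_i$-convexity of $f_i^\ast$ together with the same three-point identity (now in the $[\mSk]_i^{-1}$-norm) yields the dual analogue, summed over $i$, contributing $\|\yk-y\|_{\mSk^{-1}}^2$ on the left, $\|\yhkp-y\|_{\mSk^{-1}+\mM}^2$ and $\|\yhkp-\yk\|_{\mSk^{-1}}^2$ on the right, the distance $\cF(\yhkp\mid w)=\sum_i\cF_i$, and the cross term $\langle\mA\xkp, y-\yhkp\rangle$. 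Adding the primal and dual inequalities, the cross terms combine as $\langle\mA(\xkp-x),\ybk\rangle - \langle\mA(\xkp), \yhkp - y\rangle$; after rearranging and cancelling the $\mA x$ terms against the $\cG,\cF$ definitions this collapses to exactly $-2\langle\mA(\xkp-x),\yhkp-\ybk\rangle$, matching the stated identity.

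\emph{The main obstacle I anticipate} is the algebraic reconciliation of the cross terms: one must track precisely which copies of $\langle\mA\xkp,y\rangle$, $\langle\mA x,\ybk\rangle$ and $\langle\mA\xkp,\yhkp\rangle$ arise from the two subgradient inequalities versus from the definitions of $\cG(\xkp\mid w)$ and $\cF(\yhkp\mid w)$ (which themselves carry $\langle\mAa\tilde y,\cdot\rangle$ and $\langle\mA_i\tilde x,\cdot\rangle$ terms), and verify that everything telescopes into the single bilinear remainder $-2\langle\mA(\xkp-x),\yhkp-\ybk\rangle$ with no stray terms. The strong-convexity contributions are the cleaner part: the $\tfrac{\mu_g}{2}\|x-\xkp\|^2$ and $\tfrac{\mu_i}{2}\|y_i-\yhkp_i\|^2$ terms simply augment the diagonal of the step-size inverse, upgrading $\mTk^{-1}$ to $\mTk^{-1}+\mu_g\mI$ and $\mSk^{-1}$ to $\mSk^{-1}+\mM$ on the right-hand side, which is precisely how those operators appear in the statement.
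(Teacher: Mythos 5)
Your proposal is correct and follows the paper's proof essentially verbatim: the paper likewise writes the two prox-optimality subgradient inequalities augmented by the strong-convexity terms $\frac{\mu_g}{2}\|x - \xkp\|^2$ and $\frac{\mu_i}{2}\|y_i - \yhkp_i\|^2$, sums all of them doubled, applies the three-point identity $2 \Angle{\mB(a-b)}{c-b} = \|a-b\|_\mB^2 + \|b-c\|_\mB^2 - \|a-c\|_\mB^2$, and completes the generalized distances $\cG(\xkp | w)$ and $\cF(\yhkp | w)$, with the cross terms collapsing to $-2\Angle{\mA(\xkp - x)}{\yhkp - \ybk}$ exactly as you describe. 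One slip to fix: the identity as you wrote it, $\Angle{\mTk^{-1}(\xk-\xkp)}{x-\xkp} = \tfrac12\left(\|\xk-x\|_{\mTk^{-1}}^2 - \|\xkp-x\|_{\mTk^{-1}}^2 + \|\xkp-\xk\|_{\mTk^{-1}}^2\right)$, has the two distance-to-$x$ terms with the wrong signs; your right-hand side actually equals $\Angle{\mTk^{-1}(\xk-\xkp)}{\xk - x}$, not the inner product arising from the prox step. The correct expansion is
\begin{align*}
2\Angle{\mTk^{-1}(\xk - \xkp)}{x - \xkp} = \|\xk - \xkp\|^2_{\mTk^{-1}} + \|x - \xkp\|^2_{\mTk^{-1}} - \|\xk - x\|^2_{\mTk^{-1}} \, ,
\end{align*}
which is precisely what leaves $\|\xk - x\|^2_{\mTk^{-1}}$ alone on the left of the lemma; with your signs the roles of $\|\xk - x\|^2$ and $\|\xkp - x\|^2$ would be interchanged and the resulting inequality would point the wrong way. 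Everything else in your bookkeeping — the block-wise dual analogue, the $\mu$-terms upgrading the weights to $\mTk^{-1} + \mu_g \mI$ and $\mSk^{-1} + \mM$ (note the factor $2$ from doubling is absorbed since the statement carries $\frac{\mu_g}{2}$ and $\frac{\mu_i}{2}$ twice), and the reconciliation of the bilinear terms against the definitions of $\cG$ and $\cF$ — matches the paper's argument.
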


\begin{proof}[Proof of \cref{THE:NONSTRCVX}]
The result of \cref{LEM:STD} (with constant step sizes) has to be adapted to the stochastic setting as the dual iterate is updated only with a certain probability. First, a trivial observation is that for any mapping $\varphi$ it holds that
\begin{align}
\varphi(\yhkp_i)
&= \frac1{p_i} \Ekp \varphi(\ykp_i) - \left(\frac1{p_i}-1\right) \varphi(\yk_i)  \notag \\
&= \left(\frac1{p_i}-1\right) \Ekp \varphi(\ykp_i) - \left(\frac1{p_i}-1\right) \varphi(\yk_i) + \Ekp \varphi(\ykp_i) \, . \label{EQU:THM1:TRIVIAL}
\end{align}
Thus, for the generalized distance of $f^\ast$ we arrive at
\begin{align}
\cF(\yhkp | w)
&= \Ekp \cF^p(\ykp | w) - \cF^p(\yk | w) + \Ekp \cF(\ykp | w) \, . \label{EQU:THM1:F}
\end{align}
and for any block diagonal matrix $\mB = \operatorname{diag}(\mB_1, \ldots, \mB_n)$
\begin{align}
  \|\yhkp - \cdot\|^2_\mB &= \Ekp \|\ykp - \cdot\|^2_{\mQ\mB } - \|\yk - \cdot\|^2_{(\mQ - \mI)\mB } \, , \label{EQU:THM1:N} \\
  \yhkp &= \mQ \Ekp \ykp - (\mQ - \mI) \yk \, . \label{EQU:THM1:V}
\end{align}
Using \cref{EQU:THM1:F,EQU:THM1:N,EQU:THM1:V}, we can rewrite the estimate of \cref{LEM:STD} as
\begin{align}
& \quad \; \|\xk - x\|_{\mT^{-1}}^2 + \|\yk - y\|^2_{\mQ \mS^{-1}} + 2 \cF^p(\yk | w) \notag \\
&\geq \Ekp \biggl\{ \|\xkp - x\|_{\mT^{-1}}^2 + \|\ykp - y\|^2_{\mQ \mS^{-1}} + 2 \cF^p(\ykp | w) \notag \\
&\qquad + 2 \cH(\wkp | w) - 2 \Angle{\mA (\xkp - x)}{\mQ (\ykp - \yk) + \yk - \ybk} \notag \\
&\qquad + \|\xkp - \xk\|_{\mT^{-1}}^2 + \|\ykp - \yk\|^2_{\mQ \mS^{-1}} \biggr\} \, . \label{EQU:THM1:START}
\end{align}
where we have used the identity
\begin{align}
\|\cdot\|^2_{\mB} + \|\cdot\|^2_{\mD} = \|\cdot\|^2_{\mB + \mD} \label{EQU:MATRIXID}
\end{align}
to simplify the expression. With the extrapolation $\ybk = \yk + \mQ (\yk - \ykm)$, the inner product term can be reformulated as
\begin{align}
&\quad \; - \Angle{\mA (\xkp - x)}{\mQ (\ykp - \yk) + \yk - \ybk} \notag \\
&= - \Angle{\mQ\mA (\xkp - x)}{\ykp - \yk} + \Angle{\mQ \mA (\xkp - x)}{\yk - \ykm} \notag \\
&= - \Angle{\mQ \mA (\xkp - x)}{\ykp - \yk} + \Angle{\mQ \mA (\xk - x)}{\yk - \ykm} \notag \\
&\qquad + \Angle{\mQ \mA (\xkp - \xk)}{\yk - \ykm} \label{EQU:THM1:IP}
\end{align}
and with \cref{LEM:ESO} and $\gamma^2 \De \max_i v_i / p_i$ it holds that
\begin{align}
&\quad \; 2 \Ek \Angle{\mQ \mA (\xkp - \xk)}{\yk - \ykm} \notag \\
&\geq - \Ek \left\{\gamma^2 \|\xkp - \xk\|^2_{\mT^{-1}} + \|\yk - \ykm\|^2_{\mQ \mS^{-1}} \right\} \, . \label{EQU:THM1:ESO}
\end{align}
Taking expectations with respect to $\sS^1, \ldots, \sS^\Iter$ (denoting this by $\E$) on \cref{EQU:THM1:START}, using the estimates \cref{EQU:THM1:IP,EQU:THM1:ESO} and denoting
\begin{align*}
\Dk \De \E \biggl\{& \|\xk - x\|_{\mT^{-1}}^2 + \|\yk - y\|^2_{\mQ \mS^{-1}} + 2 \cF^p (\yk | w) \\
&\qquad + \|\yk - \ykm\|^2_{\mQ \mS^{-1}} - 2 \Angle{\mQ \mA (\xk - x)}{\yk - \ykm} \biggr\}
\end{align*}
leads with $\gamma < 1$ (follows directly from \cref{EQU:THM1:ESOPARAM}) to
\begin{align}
\Dk &\geq \Dkp + \E \left(2 \cH(\wkp | w) + (1-\gamma^2)\|\xkp - \xk\|_{\mT^{-1}}^2 \right) \notag \\
&\geq \Dkp + 2 \E \cH(\wkp | w) \, . \label{EQU:LEM:STD:STOCHASTIC}
\end{align}
Summing \cref{EQU:LEM:STD:STOCHASTIC} over $\iter = 0, \ldots, \Iter-1$ (note that $y^{(-1)} = y^{(0)}$) and using the estimate (follows directly from \cref{LEM:ESO})
\begin{align*}
    \DK \geq (1-\gamma^2)\|\xK - x\|_{\mT^{-1}}^2 + \|\yK - y\|^2_{\mQ \mS^{-1}} + 2 \cF^p (\yK | w) \geq 2 \cF^p (\yK | w)
\end{align*}
yields
\begin{align}
\E \left\{ \cF^p (\yK | w) + \sum_{\iter=1}^{\Iter} \cH(\wk | w) \right\} \leq \frac{\Dn}{2}\, . \label{LEM:PROOF:SUMMING:EQ}
\end{align}

All assertions of the theorem follow from inequality \cref{LEM:PROOF:SUMMING:EQ}. Inserting a saddle point $w = \wo$ and taking the limit $\Iter \to \infty$, it follows from \cref{LEM:PROOF:SUMMING:EQ} that $\E \sum_{\iter=1}^\infty D_h^q(\wk, \wo) < \infty$ which implies almost surely $\sum_{\iter=1}^{\infty} D_h^q(\wk, \wo) < \infty$ and thus \cref{EQU:THM1:ASCONV}.

To see \cref{EQU:THM1:ERGGAP}, note first that $$\cF^p(\yn | w) - \cF^p(\yK | w) = \cF^p(\yn | x, \yK) \leq \sup_{w \in \sB} \cF^p(\yn | w)$$
and $\frac{\Dn}{2} - \cF^p(\yK | w) \leq C_{\sB}$ if $w \in \sB$ with $C_{\sB}$ as defined in \cref{EQU:THM1:CONST}. Moreover, the generalized distance $\cH(\cdot | w)$ is convex, thus, dividing \cref{LEM:PROOF:SUMMING:EQ} by $\Iter$ yields
\begin{align*}
\E \cH(w_{(\Iter)} |w) \leq \frac 1{\Iter} \E \sum_{\iter=1}^{\Iter} \cH(\wk | w) \leq \frac{C_\sB}{\Iter}
\end{align*}
for any $w \in \sB$. Taking the supremum over $w \in \sB$ yields \cref{EQU:THM1:ERGGAP}. Noting that $D_h^q(w, \wo) = G_{\{\wo\}}(w)$ completes the proof.
\end{proof}
}

\section{Semi-Strongly Convex Case}
In this section we propose two algorithms that converge as $\mathcal O(1/\Iter^2)$ if either $f_i^\ast$ or $g$ is strongly convex. \new{For simplicity we restrict ourselves from now on to scalar-valued step sizes, i.e. $\mT = \tau \mI$ and $\mS_i = \sigma_i \mI$. However, large parts of what follows holds true for operator-valued step sizes, too.}

\begin{algorithm}
\caption{Stochastic Primal-Dual Hybrid Gradient algorithm with acceleration on the primal variable (\textbf{PA-SPDHG}). \newline \textbf{Input:} $\xn, \yn$, $\tn \in \sR, \sn \in \sR^n$, $\sSk$, $\Iter$. \textbf{Initialize:} $\ybn = \yn$} \label{ALG:APDHG:PRIMAL}
\begin{algorithmic}[1]
    \FOR{$\iter = 0, \ldots, \Iter-1$}
    \STATE{$\xkp = \Prox{\tk}{g}{\xk - \tk \mAa \ybk}$}
    \STATE{Select $\sSkp \subset \{1, \ldots, n\}$.}
    \STATE{$\ykp_i = \begin{cases} \Prox{\sk_i}{f^\ast_i}{\yk_i + \sk_i \mA_i \xkp} & \text{if $i \in \sSkp$} \\ \yk_i & \text{else} \end{cases}$}
    \STATE{$\thk = (1 + 2 \mu_g \tk)^{-1/2}\, , \quad \tkp = \thk \tk \, , \quad \skp = \sk / \thk$}
    \STATE{$\ybkp = \ykp + \thk \mQ \left(\ykp - \yk\right)$}
    \ENDFOR
\end{algorithmic}
\end{algorithm}

\begin{algorithm}
\caption{Stochastic Primal-Dual Hybrid Gradient algorithm with acceleration on the dual variable (\textbf{DA-SPDHG}). \newline \textbf{Input:} $\xn, \yn$, $\tn \in \sR, \stn \in \sR$, $\sSk$, $\Iter$. \textbf{Initialize:} $\ybn = \yn$} \label{ALG:APDHG:DUAL}
\begin{algorithmic}[1]
    \FOR{$\iter = 0, \ldots, \Iter-1$}
    \STATE{$\xkp = \Prox{\tk}{g}{x^\iter - \tk \mAa \ybk}$}
    \STATE{Select $\sSkp \subset \{1, \ldots, n\}$.}
    \STATE{$\sk_i = \frac{\stk}{\mu_i [p_i - 2(1-p_i) \stk]} \, , \quad i \in \sSkp$}
    \STATE{$\ykp_i = \begin{cases} \Prox{\sk_i}{f^\ast_i}{\yk_i + \sk_i \mA_i \xkp} & \text{if $i \in \sSkp$} \\ \yk_i & \text{else} \end{cases}$}
    \STATE{$\thk = (1 + 2 \stk)^{-1/2} \, , \quad \tkp = \tk / \thk \, , \quad \stkp = \thk \stk$}
    \STATE{$\ybkp = \ykp + \thk \mQ \left(\ykp - \yk\right)$}
    \ENDFOR
\end{algorithmic}
\end{algorithm}

\begin{theorem}[Dual Strong Convexity] \label{THM:SEMISTRONGLY:DUAL}
Let $f_i^\ast$ be strongly convex with constants $\mu_i > 0, i = 1, \ldots, n$. Consider \cref{ALG:APDHG:DUAL} and let the initial step sizes $\stn, \tn$ be chosen such that
\begin{align}
\stn < \min_i \frac{p_i}{2 (1 - p_i)} \, . \label{EQU:SIGMATILDE}
\end{align}
and for the ESO parameters $\{v_i\}$ of $\mSn^{1/2} \mA \tn^{1/2}$ it holds that
\begin{align}
v_i \leq p_i \qquad i = 1, \ldots, n \label{EQU:THM2:ESOPARAM}
\end{align}
with $[\mSk]_i = \sk_i \mI$ and
\begin{align}
\sk_i = \frac{\stk}{\mu_i [p_i - 2(1-p_i) \stk]} \, .
\end{align}
Then there exists $\tilde \Iter \in \sN$ such that for all $\Iter \geq \tilde \Iter$ it holds
\begin{align*}
\E \|\yK - \yo\|_{\mYn}^2 \leq \frac 2{\Iter^2} \left( \|\xn - \xo\|_{\tn^{-1}}^2 + \|\yn - \yo\|_{\mYn}^2 \right)
\end{align*}
where the metric on $\sY$ is defined by $\mYk \De \mQ \mSk^{-1} + 2 \mM(\mQ - \mI)$.
\end{theorem}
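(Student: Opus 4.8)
The plan is to follow the template of the proof of \cref{THE:NONSTRCVX}, upgrading it in three ways: the step sizes now vary with $\iter$, the dual strong-convexity operator $\mM$ is active (while $\mu_g$ is allowed to vanish), and the telescoping is \emph{weighted} in the style of Chambolle--Pock acceleration. Concretely, I would start from \cref{LEM:STD} with the iteration-dependent step sizes $\tk$ and $\mSk$, retaining the strong-convexity gain so that the dual quadratic on the right-hand side is measured in $\mSk^{-1}+\mM$. I then pass to conditional expectations exactly as in \cref{THE:NONSTRCVX}, using the identities \cref{EQU:THM1:F,EQU:THM1:N,EQU:THM1:V} to replace the deterministic $\yhkp$ by $\Ekp\ykp$; moving the $(\mQ-\mI)$-weighted term to the left as there produces a dual norm weighted by $\mQ\mSk^{-1}+(\mQ-\mI)\mM$ on the left and by $\mQ(\mSk^{-1}+\mM)$ on the right.

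The algebraic heart of the proof is the identity $\mYk = \mQ\mSk^{-1}+2\mM(\mQ-\mI) = \stk^{-1}\mM$, which follows by substituting the prescribed $\sk_i = \stk/(\mu_i[p_i-2(1-p_i)\stk])$ into the definition of $\mYk$; the two $\mM$-contributions cancel and leave the clean block form $\stk^{-1}\mM$. Together with $\thk^{-2}=1+2\stk$, this yields the scaling relations $\mYkp=\thk^{-1}\mYk$, the invariant $\tk\stk\equiv\tn\stn$, and a growing weight $T_\iter \De \tk/\tn = \stn/\stk$. Multiplying the $\iter$-th expected inequality by $T_\iter$ is what drives the acceleration: the primal term acquires the constant weight $\tn^{-1}$ (since $T_\iter/\tk=\tn^{-1}$), while on the dual side the factor-$2$ in the definition of $\mYk$ is tuned so that $T_\iter$ times the right-hand dual metric dominates $T_{\iter+1}$ times the next left-hand dual metric $\mQ\mSkp^{-1}+(\mQ-\mI)\mM$; verifying this one block inequality is a direct computation using $\thk^{-2}=1+2\stk$.

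Next I would handle the extrapolation $\ybk=\yk+\thkm\mQ(\yk-\ykm)$ as in \cref{EQU:THM1:IP}, splitting the inner product into a genuinely telescoping part $-\Angle{\mQ\mA(\xkp-x)}{\ykp-\yk}+\Angle{\mQ\mA(\xk-x)}{\yk-\ykm}$ and a residual $\Angle{\mQ\mA(\xkp-\xk)}{\yk-\ykm}$; the residual is controlled by \cref{LEM:ESO} with $\gamma^2\De\max_i v_i/p_i\le1$ guaranteed by \cref{EQU:THM2:ESOPARAM}, exactly as in \cref{EQU:THM1:ESO}. Bundling the surviving quadratics, the generalized distances $\cF^p$, and the telescoping inner-product piece into a weighted energy $\Dk$, I would sum the resulting per-step inequality (the analogue of \cref{EQU:LEM:STD:STOCHASTIC}) over $\iter=0,\dots,\Iter-1$, using $y^{(-1)}=\yn$ to annihilate the boundary term at $\iter=0$ and discarding the nonnegative gap terms $\cH(\wkp\mid\wo)\ge0$.

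The summation leaves one uncancelled boundary cross term at $\iter=\Iter$, of the form $-\Angle{\mQ\mA(\xK-\xo)}{\yK-\yKm}$, which must be absorbed into the positive quadratics $\tfrac{1}{2\tn}\|\xK-\xo\|^2$ and $T_\Iter\|\yK-\yo\|_{\mYK}^2$ via Cauchy--Schwarz and a final application of \cref{LEM:ESO}. This absorption is the main obstacle: because both the weights $T_\iter$ and the step sizes vary, the coefficient left on $\|\yK-\yo\|_{\mYn}^2$ after absorption is favorable only once the step sizes have decayed sufficiently, and it is exactly this requirement that produces both the constant $2$ and the threshold $\tilde\Iter$. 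Granting the absorption, the surviving inequality controls $T_\Iter\|\yK-\yo\|_{\mYK}^2$ by a multiple of $C_0\De\|\xn-\xo\|_{\tn^{-1}}^2+\|\yn-\yo\|_{\mYn}^2$; converting via $\mYK=\stK^{-1}\mM$ and $T_\Iter=\stn/\stK$ gives $\E\|\yK-\yo\|_{\mYn}^2=(\stK/\stn)^2\,O(C_0)$. It remains to bound the step sizes: writing $a_\iter\De 1/\stk$, the recursion $\stkp=\stk(1+2\stk)^{-1/2}$ becomes $a_{\iter+1}^2=a_\iter^2+2a_\iter$, whence $a_\Iter\gtrsim\Iter$ and $\stK\le\sqrt2\,\stn/\Iter$ for $\Iter\ge\tilde\Iter$, delivering $\E\|\yK-\yo\|_{\mYn}^2\le \tfrac{2}{\Iter^2}C_0$.
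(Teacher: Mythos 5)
Your architecture matches the paper's: start from \cref{LEM:STD} with varying step sizes, pass to conditional expectations via \cref{EQU:THM1:N,EQU:THM1:V}, exploit the identity $\mYk=\mQ\mSk^{-1}+2\mM(\mQ-\mI)=\stk^{-1}\mM$ together with $\thk=\stkp/\stk$ and the invariance of $\tk\stk$, telescope, and finish with the scalar asymptotics of $\stkp=\stk(1+2\stk)^{-1/2}$ (the paper cites Corollary 1 of Chambolle--Pock for exactly the $a_{\iter+1}^2=a_\iter^2+2a_\iter$ estimate you give). However, there is a genuine gap in your strong-convexity bookkeeping, and it is fatal to the telescoping as you describe it. You keep only the prox-level quadratic, so your per-step inequality carries $\mSk^{-1}+\mM$, giving left/right dual metrics $\mQ\mSk^{-1}+\mM(\mQ-\mI)$ and $\mQ\mSk^{-1}+\mM\mQ$, and you then explicitly \emph{discard} the gap terms $\cH(\wkp\mid\wo)\geq 0$. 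The paper's \cref{LEM:STD:STOCHASTIC} instead \emph{converts} them: at a saddle point, $2\cH(\xkp,\yhkp\mid\wo)=2D_h^q(\ldots)\geq\|\xkp-\xo\|^2_{\mu_g}+\|\yhkp-\yo\|^2_{\mM}$, which upgrades the metrics to $\mQ\mSk^{-1}+2\mM(\mQ-\mI)$ and $\mQ\mSk^{-1}+2\mM\mQ$. That factor of two is precisely what the prescribed parameters are tuned to: substituting $\frac{1}{p_i\sk_i}=\frac{\mu_i}{\stk}-\frac{2(1-p_i)\mu_i}{p_i}$, the domination $\thk\bigl(\frac{1}{p_i\sk_i}+\frac{2\mu_i}{p_i}\bigr)\geq\frac{1}{p_i\skp_i}+\frac{2(1-p_i)\mu_i}{p_i}$ (the paper's \cref{THM:SEMISTRONGLY:DUAL:STEPS:B}) reduces to $\thk\frac{1+2\stk}{\stk}\geq\frac{1}{\stkp}$, which holds with equality for $\thk=(1+2\stk)^{-1/2}$. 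With your factor-one metrics the same computation reduces to $1-p_i\geq\thk$; but condition \cref{EQU:SIGMATILDE} gives $1+2\stk<\frac{1}{1-p_i}$, hence $\thk>\sqrt{1-p_i}\geq 1-p_i$, so the required inequality fails for every block at every iteration. Your sketch is thus internally inconsistent: the identity $\mYk=\stk^{-1}\mM$ you rightly call the heart of the proof needs the $2\mM$ terms that you threw away with $\cH$.

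Two smaller corrections. First, you misattribute the origin of the constant $2$ and the threshold $\tilde\Iter$: the terminal cross term $-2\langle\mQ\mA(\xK-\xo),\yK-\yKm\rangle$ is not a delicate obstacle; the paper absorbs it in one line via \cref{LEM:ESO}, using $\gamma\leq 1$ from \cref{EQU:THM2:ESOPARAM} and simply dropping the nonnegative remainder $(1-\gamma^2)\|\xK-\xo\|^2_{\tK^{-1}}$ (note the theorem only asserts a dual bound, which is why $v_i\leq p_i$ rather than $v_i<p_i$ suffices). The constant and $\tilde\Iter$ come solely from the scalar asymptotics of $\stK$. Second, \cref{EQU:THM2:ESOPARAM} is imposed only at $\iter=0$, so you must check it propagates: the paper verifies $\skp_i=\thk\sk_i/(1+2(1-\thk)(1-p_i)\mu_i\sk_i)\leq\thk\sk_i$, whence $\tkp\skp_i\leq\tk\sk_i$ and the ESO bound $v_i\leq p_i$ holds for all $\iter$; your invariant $\tk\stk\equiv\tn\stn$ concerns the auxiliary sequence and does not by itself control the relevant products $\tk\sk_i$.
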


\begin{remark}
As already noted in \cite{Chambolle2011}, $\tilde \Iter$ is usually fairly small so that the estimate in \cref{THM:SEMISTRONGLY:DUAL} has practical relevance.
\end{remark}

\begin{remark}
For serial sampling the condition on the ESO parameters \cref{EQU:THM2:ESOPARAM} is equivalent to
\begin{align*}
    \stn \leq \min_i \frac{\mu_i p_i^2}{\tn \|\mA_i\|^2 + 2 \mu_i p_i(1-p_i)} \, .
\end{align*}
In particular, it implies condition \cref{EQU:SIGMATILDE} on $\stn$.
\end{remark}

\begin{remark}
The convergence of \cref{ALG:APDHG:PRIMAL} with acceleration on the primal variable is similar to the deterministic case, cf. Appendix C.2 of \cite{Chambolle2016}, and omitted here for brevity. It converges with rate $\mathcal O(1/K^2)$ if the ESO parameters satisfy $v_i < p_i$.
\end{remark}

This theorem requires an estimate on the expected contraction similar to the proof of \cref{THE:NONSTRCVX} and shown in the appendix.

\new{\begin{lemma} \label{LEM:STD:STOCHASTIC}
Let $\xkp, \yhkp$ be defined as in \cref{LEM:STD} and $\ykp_i = \yhkp_i$ with probability $p_i > 0$ and unchanged else. Moreover, let
\begin{align}
\ybkp = \ykp + \thk \mQ \left(\ykp - \yk\right) \label{EQU:LEM:EXTRA}
\end{align}
and $\{v_i\}$ be some ESO parameters of $\mSk^{1/2} \mA \tk^{1/2}$. Then with $\gamma^2 = \max_i \frac{v_i}{p_i}$ it holds
\begin{align*}
& \quad \; \E^{(\iter, \iter-1)} \biggl\{ \|\xk - \xo\|_{\tk^{-1}}^2 + \|\yk - \yo\|^2_{\mQ \mSk^{-1} + 2 \mM(\mQ - \mI)} \\
&\qquad - 2 \thkm \langle \mQ \mA(\xk - \xo), \yk - \ykm \rangle + (\gamma \thkm)^2  \|\yk - \ykm\|^2_{\mQ \mSk^{-1}} \biggr\} \\
&\geq \E^{(\iter+1, \iter)} \biggl\{ \|\xkp - \xo\|_{\tk^{-1} + 2 \mu_g\mI}^2 + \|\ykp - \yo\|^2_{\mQ\mSk^{-1} + 2 \mM\mQ} \\
&\qquad - 2 \langle \mQ \mA(\xkp - \xo), \ykp - \yk \rangle + \|\ykp - \yk\|^2_{\mQ \mSk^{-1}} \biggr\} \, .
\end{align*}
\end{lemma}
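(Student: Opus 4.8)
The plan is to follow exactly the template of the proof of \cref{THE:NONSTRCVX}: start from the deterministic one-step estimate \cref{LEM:STD} specialized to the saddle point $(x,y) = (\xo,\yo)$, and then pass to the stochastic quantities $\ykp$ using the same conversion identities. Writing \cref{LEM:STD} at $(\xo,\yo)$ produces an inequality whose right-hand side contains the strong-convexity-inflated norms $\|\xkp-\xo\|^2_{\tk^{-1}+\mu_g\mI}$ and $\|\yhkp-\yo\|^2_{\mSk^{-1}+\mM}$, the nonnegative distance terms $2(\cG(\xkp|\wo)+\cF(\yhkp|\wo))$, the cross term $-2\langle\mA(\xkp-\xo),\yhkp-\ybk\rangle$, and the descent terms $\|\xkp-\xk\|^2_{\tk^{-1}}$ and $\|\yhkp-\yk\|^2_{\mSk^{-1}}$.

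The first key step is to absorb the distance terms into the quadratic norms, which is exactly what generates the factor of two on the strong-convexity metrics. Since $\wo$ is a saddle point, the optimality conditions $-\mAa\yo\in\partial g(\xo)$ and $\mA_i\xo\in\partial f_i^\ast(\yo_i)$ make $\cG(\xkp|\wo)=D_g^{-\mAa\yo}(\xkp,\xo)$ and $\cF(\yhkp|\wo)=D_{f^\ast}^{\mA\xo}(\yhkp,\yo)$ genuine Bregman distances, so $\mu_g$-convexity of $g$ and $\mu_i$-convexity of $f_i^\ast$ give $2\cG(\xkp|\wo)\geq\mu_g\|\xkp-\xo\|^2$ and $2\cF(\yhkp|\wo)\geq\|\yhkp-\yo\|^2_{\mM}$. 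Adding these to the existing $\mu_g\mI$ and $\mM$ contributions upgrades the right-hand side to $\|\xkp-\xo\|^2_{\tk^{-1}+2\mu_g\mI}$ and $\|\yhkp-\yo\|^2_{\mSk^{-1}+2\mM}$, at the cost of discarding only the nonnegative Bregman remainders; this is where the operators $2\mu_g\mI$ and $2\mM$ in the statement come from.

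Next I would convert the remaining $\yhkp$-dependent quadratics into $\ykp$-quadratics using \cref{EQU:THM1:N} and \cref{EQU:THM1:V}, precisely as in \cref{THE:NONSTRCVX}. Applying \cref{EQU:THM1:N} with $\mB=\mSk^{-1}+2\mM$ turns $\|\yhkp-\yo\|^2_{\mSk^{-1}+2\mM}$ into $\Ekp\|\ykp-\yo\|^2_{\mQ\mSk^{-1}+2\mM\mQ}$ minus a $\yk$-term carrying $(\mQ-\mI)$, which, combined with the bare $\|\yk-\yo\|^2_{\mSk^{-1}}$ on the left, assembles the left-hand metric $\mQ\mSk^{-1}+2\mM(\mQ-\mI)$ (using $\mQ\mM=\mM\mQ$ and \cref{EQU:MATRIXID}); applying it to $\|\yhkp-\yk\|^2_{\mSk^{-1}}$ gives $\Ekp\|\ykp-\yk\|^2_{\mQ\mSk^{-1}}$. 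For the cross term I substitute the extrapolation $\ybk=\yk+\thkm\mQ(\yk-\ykm)$ from \cref{EQU:LEM:EXTRA} together with $\yhkp=\mQ\Ekp\ykp-(\mQ-\mI)\yk$ of \cref{EQU:THM1:V}, so that $\yhkp-\ybk=\mQ\Ekp(\ykp-\yk)-\thkm\mQ(\yk-\ykm)$; moving $\mQ$ onto $\mA$ and splitting $\xkp-\xo=(\xk-\xo)+(\xkp-\xk)$ yields the stochastic inner product $-2\langle\mQ\mA(\xkp-\xo),\ykp-\yk\rangle$ on the right, the lagged term $-2\thkm\langle\mQ\mA(\xk-\xo),\yk-\ykm\rangle$ on the left, and the residual cross term $2\thkm\langle\mQ\mA(\xkp-\xk),\yk-\ykm\rangle$ still to be estimated.

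The final step bounds this residual cross term and is where the coefficient $(\gamma\thkm)^2$ appears. Applying the weighted Young inequality pointwise in the step-$\iter$ sampling, followed by the ESO bound on $\yk-\ykm$ (which is supported on that sampling with deterministic full increments $\yhk-\ykm$) — the same computation underlying \cref{LEM:ESO} and \cref{EQU:THM1:ESO}, but with the free splitting constant chosen so that the primal penalty carries coefficient one — gives $2\thkm\,\E_{\sSk}\langle\mQ\mA(\xkp-\xk),\yk-\ykm\rangle\geq-\E_{\sSk}\{\|\xkp-\xk\|^2_{\tk^{-1}}+(\gamma\thkm)^2\|\yk-\ykm\|^2_{\mQ\mSk^{-1}}\}$. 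The first term cancels the $\|\xkp-\xk\|^2_{\tk^{-1}}$ inherited from \cref{LEM:STD}, and the second moves to the left as the stated $(\gamma\thkm)^2\|\yk-\ykm\|^2_{\mQ\mSk^{-1}}$. I expect the only genuine obstacle to be the expectation bookkeeping: one must keep track that $\xkp$, $\yk$ and $\ybk$ are determined by the samplings through step $\iter$ while $\ykp$ is averaged over $\sSkp$, and that the ESO estimate integrates the step-$\iter$ sampling still hidden in $\yk-\ykm$ (the dependence of $\xkp$ on that sampling being harmless because the Young step is pointwise). Only after organizing these nested conditional expectations into the two-index operators $\E^{(\iter,\iter-1)}$ and $\E^{(\iter+1,\iter)}$ does the displayed estimate assemble; the remaining algebra — the norm identity \cref{EQU:MATRIXID} and the commutation of the diagonal operators $\mQ$ and $\mM$ — is routine.
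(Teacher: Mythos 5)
Your proposal is correct and takes essentially the same route as the paper's own proof: specialize \cref{LEM:STD} to the saddle point and double the strong-convexity metrics via the Bregman lower bound $2\cH(\xkp, \yhkp \,|\, \wo) \geq \|\xkp - \xo\|^2_{\mu_g} + \|\yhkp - \yo\|^2_{\mM}$, convert the hatted iterates with \cref{EQU:THM1:N,EQU:THM1:V}, insert the extrapolation and split $\xkp - \xo$, and bound the residual cross term via \cref{LEM:ESO} with exactly the splitting constant $c = \thkm$ that yields coefficient one on the primal penalty and $(\gamma\thkm)^2$ on the dual one. Your observation that the pointwise Young step makes the dependence of $\xkp$ on the step-$\iter$ sampling harmless is precisely the expectation bookkeeping the paper relies on implicitly.
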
}

\begin{proof}[Proof of \cref{THM:SEMISTRONGLY:DUAL}]
The update on the step sizes in \cref{ALG:APDHG:DUAL} imply that
\begin{align}
    \thk \frac{1}{\tk} &\geq \frac{1}{\tkp} \, , \notag \\
    \thk \left(\frac{1}{p_i\sk_i} + \frac{2 \mu_i}{p_i}\right) &\geq \frac{1}{p_i \skp_i} + \frac{2  (1 - p_i) \mu_i}{p_i} \label{THM:SEMISTRONGLY:DUAL:STEPS:B}
\end{align}
for all $i = 1, \ldots, n$ and therefore
\begin{align}
    \thk \|\cdot\|_{\tk^{-1}}^2 &\geq \|\cdot\|^2_{\tkp^{-1}} \, , \label{THM2:STEP1} \\
    \thk \|\cdot\|_{\mQ \mSk^{-1} + 2 \mM \mQ}^2 &\geq \|\cdot\|_{\mQ \mSkp^{-1} + 2 \mM (\mQ - I)}^2 = \|\cdot\|_{\mYkp}^2 \, . \label{THM2:STEP2}
\end{align}
To see \cref{THM:SEMISTRONGLY:DUAL:STEPS:B}, the auxiliary sequence $\stk$ satisfies
\begin{align*}
\stk = \frac{p_i \mu_i \sk_i}{1 + 2(1-p_i)\mu_i \sk_i}
\end{align*}
such that \cref{THM:SEMISTRONGLY:DUAL:STEPS:B} is satisfied as soon as
\begin{align}
    \thk \frac{1 + 2 \stk}{\stk} \geq \frac 1{\stkp} \, . \label{THM:SEMISTRONGLY:DUAL:CONDITION:DUAL:TILDE}
\end{align}
Note that the transformation from $\stk$ to $\sk_i$ is well-defined if $\stk < \min_i \frac{p_i}{2 (1-p_i)}$ which is the case as $\stk$ is monotonically non-increasing and $\stn$ satisfies the condition. By construction of the sequence $\stkp = \thk \stk$, \cref{THM:SEMISTRONGLY:DUAL:CONDITION:DUAL:TILDE} is solved with equality by $\thk = (1 + 2\stk)^{-1/2}$. Moreover, the sequence $\sk_i$ is also non-increasing as
\begin{align*}
    \skp_i &= \frac{\thk \sk_i}{1 + 2 (1-\thk)(1-p_i) \mu_i \sk_i} \leq \thk \sk_i \, ,
\end{align*}
thus, with \cref{EQU:THM2:ESOPARAM} we see that the ESO parameters of $\mSk^{1/2} \mA \tk^{1/2}$ are also bounded by $p_i$.

For the actual proof of the theorem, note that the inequalities \cref{THM2:STEP1,THM2:STEP2} imply
\begin{align}
& \quad \; \thk \E \biggl\{ \|\xkp - \xo\|_{\tk^{-1}}^2 + \|\ykp - \yo\|_{\mQ \mSk^{-1} + 2\mM \mQ}^2 \notag \\
& \quad - 2 \Angle{\mQ \mA(\xkp - \xo)}{\ykp - \yk} \biggr\}
\geq \E \Dkp \label{THM:SEMISTRONGLY:DUAL:INEQ1}
\end{align}
with
$$\Dk \De \|\xk - \xo\|_{\tk^{-1}}^2 + \|\yk - \yo\|_{\mYk}^2 - 2 \thkm \Angle{\mQ \mA(\xk - \xo)}{\yk - \ykm} \, .$$
Thus, combining \cref{LEM:STD:STOCHASTIC} ($\mu_g = 0$) and \cref{THM:SEMISTRONGLY:DUAL:INEQ1} yields
\begin{align*}
& \quad \; \thk \E \left\{ \Dk + (\gamma\thkm)^2 \|\yk - \ykm\|^2_{\mQ \mSk^{-1}} \right\} \\
&\geq \thk \E \biggl\{ \|\xkp - \xo\|_{\tk^{-1}}^2 + \|\ykp - \yo\|^2_{\mQ\mSk^{-1} + 2 \mM\mQ} \\
&\qquad - 2 \langle \mQ \mA(\xkp - \xo), \ykp - \yk \rangle + \|\ykp - \yk\|^2_{\mQ \mSk^{-1}} \biggr\} \\
&\geq \E \left\{ \Dkp + \thk \|\ykp - \yk\|^2_{\mQ \mSk^{-1}}\right\} \, .
\end{align*}
With $\gamma \thkm \leq 1, \mSkp \leq \thk \mSk$ and $\bar \Delta^{(\iter)} \De \E \Bigl\{\Dk + \|\yk - \ykm\|^2_{\mQ \mSk^{-1}}\Bigr\}$ we derive the recursion
\begin{align*}
\thk \bar \Delta^{(\iter)}
&\geq \thk \E \left\{ \Dk + (\gamma\thkm)^2 \|\yk - \ykm\|^2_{\mQ \mSk^{-1}} \right\} \\
&\geq \E \left\{ \Dkp + \thk \|\ykp - \yk\|^2_{\mQ \mSk^{-1}}\right\} \geq \bar \Delta^{(\iter+1)}  \, .
\end{align*}
Using this inequality recursively, $y^{(-1)} = \yn$, we arrive at
\begin{align*}
\prod_{\iter=0}^{\Iter-1} \thk \bar \Delta^{(0)}
\geq \bar \Delta^{(\Iter)}
&\geq \E \left\{ (1 - \gamma^2)\|\xK - \xo\|_{\tK^{-1}}^2 + \|\yK - \yo\|_{\mYK}^2 \right\} \\
&\geq \E \|\yK - \yo\|_{\mYK}^2
\end{align*}
where the second estimated follows directly from \cref{LEM:ESO} and the third inequality from $\gamma \leq 1$ which holds by assumption \cref{EQU:THM2:ESOPARAM}.

As $\bar \Delta^{(0)} = \|\xn - \xo\|_{\tn^{-1}}^2 + \|\yn - \yo\|_{\mYn}^2$, $\thk = \frac{\stkp}{\stk}$ and $$\|\cdot\|_{\mYK}^2 = \frac{1}{\stK}\|\cdot\|^2_{\mM} = \frac{\stn}{\stK}\|\cdot\|^2_{\mYn}$$ which holds by the definition of $\stk$, it holds that
\begin{align*}
\E \|\yK - \yo\|_{\mYn}^2 \leq \left(\frac{\stK}{\stn}\right)^2 \left\{ \|\xn - \xo\|_{\tn^{-1}}^2 + \|\yn - \yo\|_{\mYn}^2 \right\} \, .
\end{align*}
Finally, the assertion follows by Corollary 1 of \cite{Chambolle2011}.
\end{proof}

\section{Strongly Convex Case}
If both $f_i^\ast$ and $g$ are strongly convex, we may find step size parameters such that the \cref{ALG:PDHG} converges linearly.
\begin{theorem} \label{THM:STRONGLY}
Let $(x^\opt, y^\opt) \in \sW$ be a saddle point and $g, f_i^\ast$ be strongly convex with constants $\mu_g, \mu_i > 0, i = 1, \ldots, n$. Let the step sizes $\tau, \sigma_1, \ldots, \sigma_n, 0 < \theta < 1$ be chosen such that the ESO parameters $\{v_i\}$ of $\mS^{1/2} \mA \tau^{1/2}$ can be estimated as
\begin{align}
    v_i &< \frac{p_i}{\theta} \qquad i = 1, \ldots, n, \label{EQU:THM3:ESOPARAM}
\end{align}
and the extrapolation $\theta$ satisfies the lower bounds
\begin{align}
    \theta \geq \frac{1}{1 + 2 \mu_g \tau} \, , \qquad \theta \geq \frac{1 + 2 (1-p_i) \mu_i \sigma_i}{1 + 2 \mu_i \sigma_i} \qquad i = 1, \ldots, n \, .\label{EQU:THM:STRONGLY:STEPSIZE}
\end{align}

Then the iterates of \cref{ALG:PDHG} converge linearly to the saddle point, in particular
\begin{align*}
\E \left\{ (1-\gamma^2 \theta) \|\xK - \xo\|_{\mX}^2 + \|\yK - \yo\|_{\mY}^2 \right\} \leq \theta^\Iter \left\{ \|\xn - \xo\|_{\mX}^2 + \|\yn - \yo\|_{\mY}^2 \right\}
\end{align*}
holds where the metrics are given by $\mX \De (\tau^{-1} + 2 \mu_g) \mI$, $\mY \De (\mS^{-1} + 2 \mM) \mQ$ and $\gamma^2 = \max_i v_i / p_i$.
\begin{proof}
The requirements \cref{EQU:THM:STRONGLY:STEPSIZE} on the step sizes $\tau, \sigma_1, \ldots, \sigma_n$ and $\theta$ imply $\theta \|\cdot\|_{\mX}^2 \geq \|\cdot\|^2_{\tau^{-1}}$ and $\theta \|\cdot\|_{\mY}^2 \geq \|\cdot\|^2_{\mQ \mS^{-1} + 2 \mM (\mQ - \mI)}$. Thus, we directly get
\begin{align}
\theta \, \E \Dk &\geq \E \biggl\{ \|\xk - \xo\|_{\tau^{-1}}^2 + \|\yk - \yo\|^2_{\mQ \mS^{-1} + 2 \mM (\mQ - \mI)} \notag \\
&\qquad - 2 \theta \Angle{\mQ \mA(\xk - \xo)}{\yk - \ykm} \biggr\} \, . \label{EQU:STRONGLY:INEQ2}
\end{align}
where we denoted
$$\Dk \De \|\xk - \xo\|_{\mX}^2 + \|\yk - \yo\|_{\mY}^2 - 2 \Angle{\mQ \mA (\xk - \xo)}{\yk - \ykm} \, .$$
Combining \cref{EQU:STRONGLY:INEQ2,LEM:STD:STOCHASTIC} with constant step sizes yields
\begin{align*}
\theta \, \E \Dk \,
&\geq \E \left\{ \Dkp + \|\ykp - \yk\|^2_{\mQ \mS^{-1}} - (\gamma \theta)^2 \|\yk - \ykm\|^2_{\mQ \mS^{-1}} \right\} \, .
\end{align*}
Multiplying both sides by $\theta^{-(\iter+1)}$ and summing over $k = 0, \ldots, \Iter-1$ yields
\begin{align*}
\Dn
&\geq \theta^{-\Iter} \E \Bigl\{ \DK + \|\yK - \yKm\|^2_{\mQ \mS^{-1}} \Bigr\} \\
&\quad + (1 - \gamma^2\theta) \E \sum_{\iter=1}^{\Iter-1} \theta^{-\iter} \|\yk - \ykm\|^2_{\mQ \mS^{-1}} \\
&\geq \theta^{-\Iter} \E \biggl\{ \|\xK - \xo\|_{\mX}^2 + \|\yK - \yo\|_{\mY}^2 + \|\yK - \yKm\|^2_{\mQ \mS^{-1}} \\
&\qquad - 2 \Angle{\mQ \mA(\xK - \xo)}{\yK - \yKm} \biggr\} \\
&\geq \theta^{-\Iter} \E \left\{ \|\xK - \xo\|_{\mX}^2 - \gamma^2\|\xK - \xo\|_{\tau^{-1}}^2 + \|\yK - \yo\|_{\mY}^2 \right\} \\
&\geq \theta^{-\Iter} \E \left\{ (1-\gamma^2\theta) \|\xK - \xo\|_{\mX}^2 + \|\yK - \yo\|_{\mY}^2 \right\}
\end{align*}
where we used again Lemma \ref{LEM:ESO} and the non-negativity of norms for the second inequality. Thus, the assertion is proven.
\end{proof}
\end{theorem}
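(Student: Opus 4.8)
The plan is to mimic the Lyapunov-function argument used for the dual-accelerated case in \cref{THM:SEMISTRONGLY:DUAL}, but now with constant step sizes, where the extrapolation parameter $\theta<1$ serves directly as a geometric contraction rate rather than being updated across iterations. Since both $g$ and $f_i^\ast$ are strongly convex, \cref{LEM:STD:STOCHASTIC} supplies contraction in both the primal and the full dual metrics simultaneously, and the whole task reduces to packaging these gains into a single telescoping inequality weighted by powers of $\theta$.

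First I would translate the step-size hypotheses \cref{EQU:THM:STRONGLY:STEPSIZE} into norm-domination inequalities. The first bound $\theta \geq (1+2\mu_g\tau)^{-1}$ gives $\theta\|\cdot\|_{\mX}^2 \geq \|\cdot\|_{\tau^{-1}}^2$, recalling $\mX = (\tau^{-1}+2\mu_g)\mI$; the per-block bounds on the dual side give $\theta\|\cdot\|_{\mY}^2 \geq \|\cdot\|^2_{\mQ\mS^{-1}+2\mM(\mQ-\mI)}$, since $\mY = (\mS^{-1}+2\mM)\mQ$ and the gap between $2\mM\mQ$ and $2\mM(\mQ-\mI)$ is exactly the block-diagonal correction controlled by the probabilities $p_i$. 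These let me pass from the next-iterate metrics produced by \cref{LEM:STD:STOCHASTIC} to the fixed metrics $\mX,\mY$ scaled by $\theta$.

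Next I would introduce the Lyapunov quantity
$$\Dk \De \|\xk - \xo\|_{\mX}^2 + \|\yk - \yo\|_{\mY}^2 - 2\Angle{\mQ\mA(\xk-\xo)}{\yk-\ykm}$$
and apply \cref{LEM:STD:STOCHASTIC} with constant step sizes (so $\thk=\theta$) at the saddle point $(\xo,\yo)$. Substituting the two domination inequalities into the right-hand side should produce the one-step recursion
$$\theta\,\E\Dk \geq \E\bigl\{ \Dkp + \|\ykp-\yk\|^2_{\mQ\mS^{-1}} - (\gamma\theta)^2\|\yk-\ykm\|^2_{\mQ\mS^{-1}} \bigr\}\, ,$$
where the last two terms are the ESO-residual terms carried through the lemma. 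The crucial sign condition is $1-\gamma^2\theta>0$, which follows directly from the ESO hypothesis \cref{EQU:THM3:ESOPARAM}.

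Finally I would multiply by $\theta^{-(\iter+1)}$ and sum over $\iter=0,\dots,\Iter-1$, using $y^{(-1)}=\yn$ so the initial cross term vanishes. The geometric weighting is what manufactures the rate $\theta^\Iter$: the telescoped sum of the positive $\|\ykp-\yk\|^2$ terms against the $-(\gamma\theta)^2\|\yk-\ykm\|^2$ terms leaves a nonnegative remainder thanks to $1-\gamma^2\theta>0$, which I simply discard. What remains at index $\Iter$ is $\|\xK-\xo\|_{\mX}^2 + \|\yK-\yo\|_{\mY}^2 + \|\yK-\yKm\|^2_{\mQ\mS^{-1}} - 2\Angle{\mQ\mA(\xK-\xo)}{\yK-\yKm}$; I would invoke \cref{LEM:ESO} once more (with $c=1$) to bound the cross term from below by $-\gamma^2\|\xK-\xo\|_{\tau^{-1}}^2 - \|\yK-\yKm\|^2_{\mQ\mS^{-1}}$, which cancels the residual $\|\yK-\yKm\|^2$ exactly. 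Using the primal domination $\|\xK-\xo\|_{\tau^{-1}}^2 \leq \theta\|\xK-\xo\|_{\mX}^2$ then converts the deduction into the stated coefficient $(1-\gamma^2\theta)$ on the primal distance. The main obstacle I anticipate is the careful bookkeeping of signs in the geometric telescoping together with the terminal \cref{LEM:ESO} step, since the cross terms at the running index and at the terminal index must both be absorbed without ever violating the $1-\gamma^2\theta>0$ budget.
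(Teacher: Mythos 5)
Your proposal is correct and follows essentially the same route as the paper's proof: the same Lyapunov quantity $\Dk$, the same one-step recursion from \cref{LEM:STD:STOCHASTIC} combined with the norm-domination consequences of \cref{EQU:THM:STRONGLY:STEPSIZE}, the same $\theta^{-(\iter+1)}$-weighted telescoping with $1-\gamma^2\theta>0$ from \cref{EQU:THM3:ESOPARAM}, and the same terminal application of \cref{LEM:ESO}. One parenthetical slip: to obtain the terminal bound $-\gamma^2\|\xK-\xo\|_{\tau^{-1}}^2 - \|\yK-\yKm\|^2_{\mQ\mS^{-1}}$ from \cref{LEM:ESO} you need $c=\gamma^{-2}$ (so that $1/c=\gamma^2$ and $c\max_i v_i/p_i = 1$), not $c=1$, which would swap the two weights and yield the weaker coefficient $(1-\theta)$ on the primal distance instead of the stated $(1-\gamma^2\theta)$.
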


\subsection{Optimal Parameters for Serial Sampling} \label{SEC:PARAM}
This analysis is to optimize the convergence rate $\theta$ of \cref{THM:STRONGLY} for three different serial sampling options where exactly one block is chosen in each iteration. Other sampling strategies, including multi-block, parallel, etc. \cite{Qu2015} will be subject of future work.

We will derive the rates and parameters in terms of the \emph{condition numbers} $\kappa_i \De \|\mA_i\|^2/(\mu_g \mu_i)$ as these are scaling invariant, thus we cannot improve the rates by simple rescaling of the problem. This can be seen as follows. If we rewrite problem \cref{EQU:SADDLE} in terms of the scaled variables $\overline x \De \alpha x$ and $\overline y_i \De \beta_i y_i$, then the corresponding operators $\overline \mA_i \De \mA_i / (\alpha \beta_i)$ have norm $\|\overline \mA_i\| = \|\mA_i\|/(\alpha \beta_i)$, the function $\overline g(\overline x) \De g(\overline x / \alpha)$ is $\overline \mu_g \De \mu_g / \alpha^2$ strongly convex and the functions $\overline f^\ast_i(\overline y_i) \De f^\ast_i(\overline y_i/\beta_i)$ are $\overline \mu_i \De \mu_i/\beta_i^2$ strongly convex. Thus the condition numbers are scaling invariant as
\begin{align*}
\overline \kappa_i
= \frac{\|\overline \mA_i\|^2}{\overline{\mu}_g \overline{\mu}_i }
= \frac{\frac1{(\alpha \beta_i)^2} \|\mA_i\|^2}{\frac1{\alpha^2} \mu_g \frac1{\beta_i^2} \mu_i}
= \frac{\|\mA_i\|^2}{\mu_g \mu_i}
= \kappa_i \, .
\end{align*}

With $\bar \sigma_i \De \sigma_i\mu_i$ and $\bar \tau \De \tau\mu_g$, the conditions on the step sizes \cref{EQU:THM:STRONGLY:STEPSIZE} become
\begin{align}
\theta \ge \frac{1}{1+2\bar\tau} \, , \quad
\theta \ge \max_i 1-2\frac{\bar\sigma_i p_i}{1+2\bar\sigma_i} \, , \quad \text{and} \quad
\max_i \bar\tau\bar\sigma_i \kappa_i \theta \le \rho^2 p_i \label{EQU:STEPSIZECONDITION:SERIAL}
\end{align}
for some $\rho<1$. The last condition arises from the ESO parameters of serial sampling which are $v_i = \sigma_i \tau \|\mA_i\|^2$, see \cref{EXA:SERIAL}. Finding optimal parameters is equivalent to equating the above inequalities. Note that the first two conditions (with equality) are equivalent to $\theta \bar\tau = (1 - \theta)/2$ and $\bar \sigma_i = \frac{1 - \theta}{2 (p_i - (1 - \theta))}$.
With these choices, the third condition in \cref{EQU:STEPSIZECONDITION:SERIAL} reads
\begin{align}
(1 - \theta)^2 \kappa \leq 4 \rho^2 p_i (p_i-(1-\theta)) \qquad i=1, \ldots, n \, . \label{EQU:STEPSIZECONDITION}
\end{align}
It follows from \cref{EQU:STEPSIZECONDITION} that with $\tilde\kappa = 1 + \kappa / \rho^2$ it holds
\begin{align}
\theta \geq \max_i 1 - \frac{2 p_i}{1 + \sqrt{\tilde\kappa_i}} \label{EQU:THETA}
\end{align}

\begin{example}[Serial uniform sampling] We first consider uniform sampling, i.e. every block is sampled with the same probability $p_i = 1 / n$. Then it is easy to see that the smallest achievable rate is given by
\begin{align}
\theta_{\text{uni}} = 1 - \frac{2}{n + n \max_j \sqrt{\tilde\kappa_j}} \label{EQU:PARAMTERS:UNIFORM}
\end{align}
and the step sizes become
$$ \sigma_i = \frac{\mu_i^{-1}}{\max_j \sqrt{\tilde\kappa_j} - 1} \, ,
\quad \tau = \frac{\mu_g^{-1}}{n - 2 + n \max_j \sqrt{\tilde\kappa_j}} \, .$$
\end{example}

\begin{example}[Serial importance sampling] Instead of uniform sampling we may sample \enquote{important blocks} more often, i.e. we sample every block with a probability proportional to the square root of its condition number $p_i = \sqrt{\kappa_i}/\sum_j \sqrt{\kappa_j}$. Then the smallest rate that achieves \cref{EQU:THETA} is given by
\begin{align}
\theta_{\text{imp}} = 1 - \frac{2 \nu}{\sum_{j=1}^n \sqrt{\kappa_j}} \label{EQU:PARAMTERS:IMPORTANCE}
\end{align}
with $\nu \De \min_j \sqrt{\kappa_j} / (1 + \sqrt{\tilde \kappa_j})$ and the step sizes are
$$\sigma_i = \frac{\nu \mu_i^{-1}}{\sqrt{\kappa_i} - 2 \nu} \, ,
\quad \tau = \frac{\nu \mu_g^{-1}}{\sum_{j=1}^n \sqrt{\kappa_j} - 2 \nu} \, .$$
\end{example}

\begin{example}[Serial optimal sampling] Instead of a predefined probability we will seek for an \enquote{optimal sampling} that minimizes the linear convergence rate $\theta$. The optimal sampling can be found by equating condition \cref{EQU:THETA} for $i = 1, \ldots, n$
\begin{align}
    \theta \left(1 + \sqrt{\tilde\kappa_i} \right) = 1 + \sqrt{\tilde \kappa_i} - 2 p_i \label{EQU:OPTIMALPARAMETERS:1} \, .
\end{align}
Summing \cref{EQU:OPTIMALPARAMETERS:1} from 1 to $n$ and using that for serial sampling $\sum_{i=1}^n p_i = 1$ leads to
\begin{align}
\theta_{\text{opt}} = 1 - \frac{2}{n + \sum_{j=1}^n \sqrt{\tilde\kappa_j}} \label{EQU:PARAMTERS:OPTIMAL}
\end{align}
with step size parameters
$$ \sigma_i = \frac{\mu_i^{-1}}{\sqrt{\tilde\kappa_i} - 1} \, ,
\quad \tau = \frac{\mu_g^{-1}}{n - 2 + \sum_{j=1}^n \sqrt{\tilde\kappa_j} } $$
and probabilities $$p_i = \frac{1 + \sqrt{\tilde\kappa_i}}{n + \sum_{j=1}^n \sqrt{\tilde\kappa_j}} \, .$$
\end{example}

\begin{remark}[Minibatches]
All arguments above can readily be extended to samplings where at each iteration not only one but a fixed number of blocks are chosen.
\end{remark}

\begin{remark}[Better Sampling]
It is easy to see that optimal sampling is better than uniform sampling: if all condition numbers are the same, then the rates for uniform sampling \cref{EQU:PARAMTERS:UNIFORM} and optimal sampling \cref{EQU:PARAMTERS:OPTIMAL} are equal but if they are not, then the rate of optimal sampling is strictly smaller and thus better.

Moreover, optimal sampling is better than importance sampling. To see this, note that due to the monotonicity of $ \sqrt{x} / (1 + \sqrt{1 + x})$ we get
\begin{align*}
\theta_{\operatorname{imp}}
&= 1 - \min_i \frac{2}{\left(1 + \sqrt{\tilde\kappa_i}\right) \sum_{j=1}^n \sqrt{\kappa_j/\rho^2} / \sqrt{\kappa_i/\rho^2}} \\
&\geq 1 - \min_i \frac{2}{\left(1 + \sqrt{\tilde\kappa_i} \right) \sum_{j=1}^n (1 + \sqrt{\tilde \kappa_j}) / (1 + \sqrt{\tilde \kappa_i})}
= \theta_{\operatorname{opt}} \, .
\end{align*}
\end{remark}

\begin{remark}[Comparison to Zhang and Xiao \cite{Zhang2015}]
The algorithm of Zhang and Xiao \cite{Zhang2015} is (almost\footnote{In contrast to our work, they have an extrapolation on both primal and dual variables. However, both extrapolations are related as our extrapolation factor is the product of their extrapolation factors.}) a special case of the proposed algorithm where each block is picked with probability $p_i = 1/n$. Here $m$ denotes the size of each block to be processed at every iteration and $n$ the number of blocks. Moreover, they only consider the strongly convex case where $g$ is $\mu_g$-strongly convex and all $f_i^\ast$ are $\mu_f$-strongly convex. Then with $R$ being the largest norm of the rows in $\mA$ they achieve
\begin{align*}
    \theta_{\text{ZX}} = 1 - \frac{1}{n + n \frac{\sqrt{m} R}{\sqrt{\mu_g \mu_f}}} \, .
\end{align*}
If the minibatch size is $m = 1$, the blocks are chosen to be single rows and the probabilities are uniform, then their rate is slightly worse than ours
\begin{align*}
    \theta_{\text{ZX}}
    = 1 - \frac{1}{n + n \max_j \sqrt{\kappa_j}}
    &\geq 1 - \frac{2}{2 n + n \max_j \sqrt{\kappa_j/\rho^2}} \\
    &\geq 1 - \frac{2}{2 n + n (\max_j \sqrt{1 + \kappa_j / \rho^2} - 1)}
    = \theta_{\operatorname{uni}}
\end{align*}
for any $\rho \geq \frac{1}{2}$. For $m > 1$, the rates differ even more as the condition numbers are conservatively estimated. Similarly, the rates can be improved by non-uniform sampling if the row norms are not equal.
\end{remark}

\section{Numerical Results}
All numerical examples are implemented in python using numpy and the operator discretization library (ODL) \cite{Adler2017}. {{\color{red}The python code and all example data will be made available on github upon acceptance of this manuscript.}

\subsection{Non-Strongly Convex PET Reconstruction}
In this example we consider positron emission tomography (PET) reconstruction with a total variation (TV) prior. The goal in PET imaging is to reconstruct the distribution of a radioactive tracer from its line integrals \cite{Ollinger1997a}. Let $\sX = \sR^{d_1 \times d_2}, d_1 = d_2 = 250$ be the space of tracer distributions (images) and $\sY_i = \sR^{|\sB_i|}$ the data spaces where $\sB_i \subset \{1, \ldots, N\}, N = 200 \cdot 250$ (200 views around the object) are subsets of indices with $\sB_i \cap \sB_j = \emptyset$ if $i \neq j$ and $\cup_{i=1}^n \sB_i = \{1, \ldots, N\}$. All samplings in this example divide the views equidistantly. It is standard that PET reconstruction can be posed as the optimization problem \cref{EQU:MINPROB} where the data fidelity term is given by the Kullback--Leibler divergence
\begin{align}
    f_i(y) =
    \begin{cases}
    \sum_{j \in \sB_i} y_j + r_j - b_j + b_j \log\left(\frac{b_j}{y_j + r_j}\right) & \text{if $y_j + r_j > 0$} \\
    \infty & \text{else}
    \end{cases} \label{EQU:EXAMPLES:KL}
\end{align}
where it is convention that $0 \log 0 \De 0$. The operator $\mA$ is a scaled X-ray transform where in each of 200 directions 250 line integrals are computed with the astra toolbox \cite{VanAarle2015,VanAarle2016}. The prior is the TV of $x$ with non-negativity constraint, i.e. $g(x) = \alpha \|\nabla x\|_{1,2} + \chi_{\geq 0}(x)$, with regularization parameter $\alpha = 0.2$ and the gradient operator $\nabla x = (\nabla_1 x, \nabla_2 x) \in \sR^{d_1 \cdot d_2 \times 2}$ is discretized by forward differences in horizontal and vertical direction, cf. \cite{Chambolle2004} for details.
The $1,2$-norm of these gradients is defined as $\|x\|_{1,2} \De \sum_j \sqrt{(\nabla_1 x_j)^2 + (\nabla_2 x_j)^2}$. The Fenchel conjugate of the Kullback--Leibler divergence \cref{EQU:EXAMPLES:KL} is
\begin{align}
    f_i^\ast(z) = \sum_{j \in \sB_i}
    \begin{cases}
    -z_j r_j - b_j \log(1-z_j) & \text{if $z_j \leq 1$ and ($b_j = 0$ or $z_j < 1$)} \\
    \infty & \text{else}
    \end{cases} \label{EQU:EXAMPLES:KLCONJ} \, ,
\end{align}
its proximal operator given by
\begin{align*}
    \left[\Prox{\sigma_i}{f^\ast_i}{z}\right]_j &= \frac12\left(z_j + 1 + \sigma_i r_j - \sqrt{(z_j - 1 + \sigma_i r_j)^2 + 4\sigma_i b_j} \right).
\end{align*}
The proximal operator for $g$ is approximated with 20 iterations of the fast gradient projection method (FGP) \cite{Beck2009a} with a warm start applied to the dual problem.

\def\StatsHeight{4.5cm}
\begin{figure}%
\def\ImWidth{3cm}%
\centering%
\includegraphics[height=\StatsHeight]{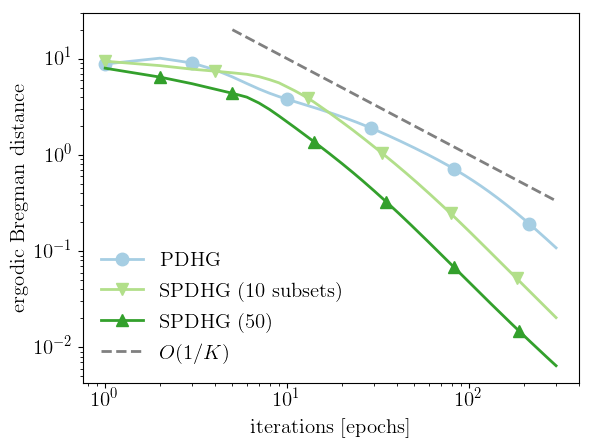} \hspace*{5mm}%
\includegraphics[height=\StatsHeight]{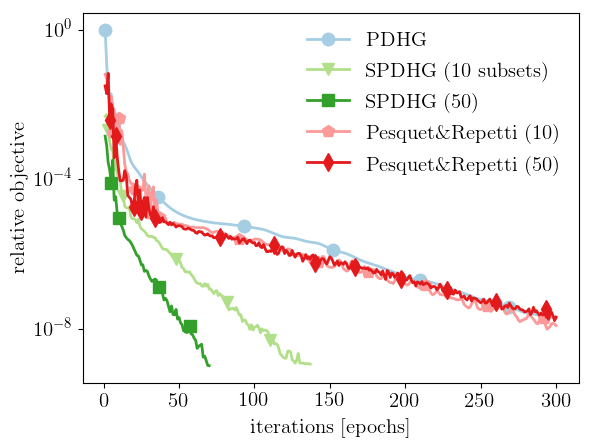}%
\caption{PET reconstruction with TV solved as a non-strongly convex problem. \textbf{Left}: As proven in \cref{THE:NONSTRCVX}, the ergodic Bregman distances converge indeed with rate $O(1/\Iter)$. \textbf{Right}: Speed comparison measured in terms of relative objective $[\Phi(\xK) - \Phi(\xo)] / [\Phi(\xn) - \Phi(\xo)]$. The proposed algorithm SPDHG converges faster than the algorithm of Pesquet\&Repetti \cite{Pesquet2015} and the deterministic PDHG.} \label{fig:case1:PETTV}%
\vspace*{4mm}
\includegraphics[width=\ImWidth]{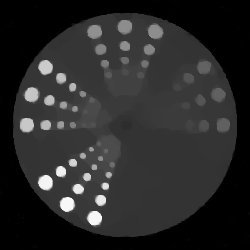}\hspace{5mm}%
\includegraphics[clip, trim=.1cm .1cm .1cm .1cm, width=\ImWidth]{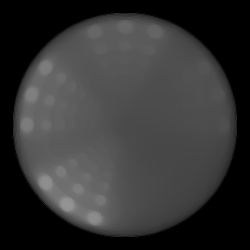}\hspace{1mm}%
\includegraphics[clip, trim=.1cm .1cm .1cm .1cm, width=\ImWidth]{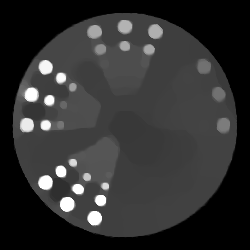}\hspace{1mm}%
\includegraphics[clip, trim=.1cm .1cm .1cm .1cm, width=\ImWidth]{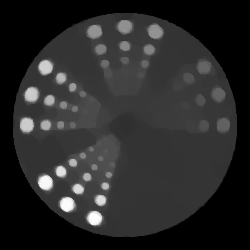}%
\caption{PET reconstruction results after 5 epochs with uniform sampling of 50 subsets. \textbf{From left to right:} approximate primal part of saddle point, PDHG, Pesquet\&Repetti \cite{Pesquet2015} and SPDHG. With the same number of operator evaluations both stochastic algorithms make much more progress towards the saddle point.} \label{fig:case1:PETTV:images}%
\end{figure}%

\paragraph{Parameters} In this experiment we choose $\gamma = 0.99$, $\theta = 1$ and all samplings are uniform, i.e. $p_i=1/n$. The number of subsets varies between $n=1$ (deterministic case), 50 and 250. The other step size parameters are chosen as
\begin{itemize}
 \item PDHG, Pesquet\&Repetti \cite{Pesquet2015}: $\sigma_i = \tau = \gamma/\|\mA\| \approx 6.9 \cdot 10^{-4}$
 \item SPDHG: $\sigma_i = \gamma/\|\mA_i\| \approx 2.2 \cdot 10^{-3}$, $\tau = \gamma / (n \max_i \|\mA_i\|) \approx 2.2 \cdot 10^{-4}$
\end{itemize}

\paragraph{Results}
\cref{fig:case1:PETTV} on the left shows that the ergodic Bregman distance converges with rate $1/k$ as proven in \cref{THE:NONSTRCVX}. On the right we compare the deterministic PDHG with the randomized SPDHG and the algorithm of Pesquet\&Repetti. It can be clearly seen that the proposed SPDHG converges much faster than both the algorithm of Pesquet\&Repetti and the deterministic PDHG. Some example images are found in \cref{fig:case1:PETTV:images} after 5 epochs which again highlight the speed-up gained by randomization.

\subsection{TV denoising with Gaussian Noise (Primal Acceleration)}
\begin{figure}%
\def\ImWidth{4cm}%
\centering%
\includegraphics[height=\StatsHeight]{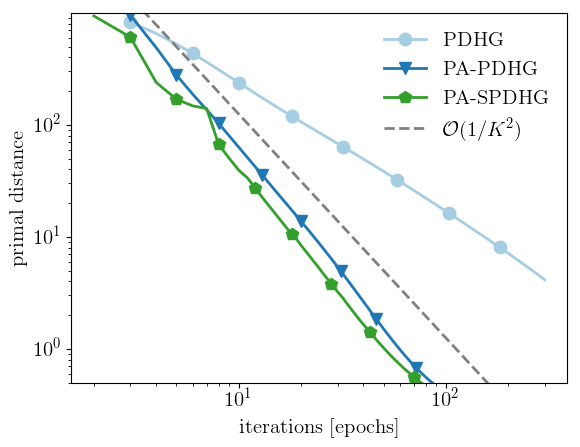} \hspace*{5mm}%
\includegraphics[height=\StatsHeight]{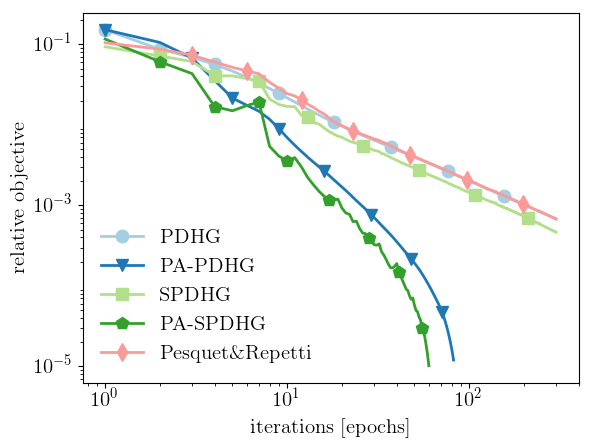}%
\caption{Primal acceleration for TV denoising. \textbf{Left:} Primal distance to saddle point $\|\xK - \xo\|^2$ \textbf{Right:} Relative objective $[\Phi(\xK) - \Phi(\xo)] / [\Phi(\xn) - \Phi(\xo)]$.} \label{fig:case2:L2TV:quant}%
\vspace*{2mm}
\includegraphics[width=\ImWidth]{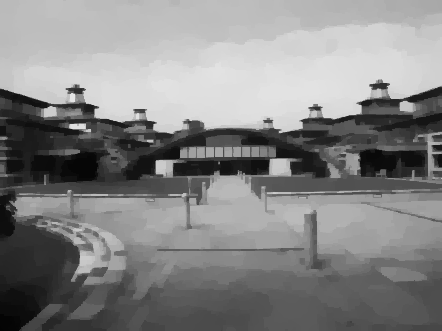}\hspace*{5mm}%
\includegraphics[width=\ImWidth]{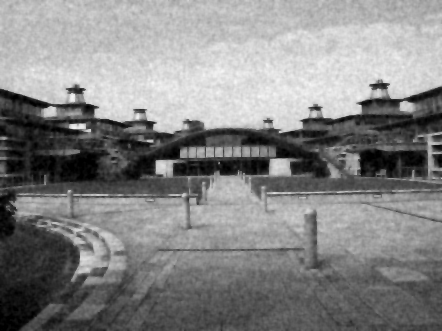}\hspace*{1mm}%
\includegraphics[width=\ImWidth]{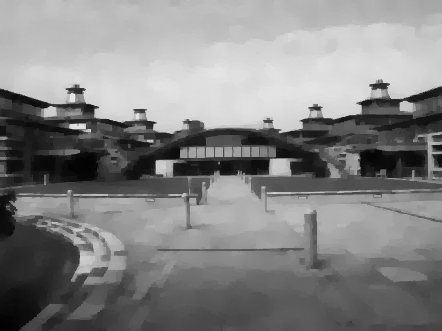}%
\caption{TV denoised images. \textbf{Left:} Approximate primal part of saddle point $\xo$ after 2000 PDHG iterations. \textbf{Right:} PDHG and primal accelerated SPDHG (PA-SPDHG) after 20 epochs.} \label{fig:case2:L2TV:images}%
\end{figure}%

In the second example we consider denoising of an image that is degraded by Gaussian noise with the help of the anisotropic TV. This can be achieved by solving \cref{EQU:MINPROB} with $\sX = \sR^{d_1 \times d_2}$, $d_1 = 442$, $d_2 = 331$, the data fit $g(x) = 1 / (2\alpha) \|x - b\|^2_2$ is the squared Euclidean norm and the prior the (anisotropic) TV $f_i(y_i) = \|y_i\|_1, \mA_i = \nabla_i$ and $n = 2$. Instead of the isotropic TV as in the previous example we consider here the anisotropic version as it is separable in the direction of the gradient. The regularization parameter is chosen to be $\alpha = 0.12$. See e.g. \cite{Chambolle2011} for details on convex conjugates and proximal operators of these functionals.
\paragraph{Parameters} In this experiment we choose $\gamma = 0.99$ and the sampling to be uniform, i.e. $p_i = 1/n$. The number of subsets are either $n=1$ in the deterministic case or $n=2$ in the stochastic case. The (initial) step size parameters are
\begin{itemize}
 \item PDHG, PA-PDHG, Pesquet\&Repetti: $\sigma_i^{(0)} = \tau^{(0)} = \gamma / \|\mA\| \approx 0.35$
 \item SPDHG, PA-SPDHG: $\sigma_i^{(0)} = \gamma / \|\mA_i\| \approx 0.50$, $\tau^{(0)} = \gamma / (n \max_i \|\mA_i\|) \approx 0.25$
\end{itemize}
The step sizes for acceleration vary with the iteration with the primal step size $\tk$ getting smaller and the dual step size $\sk$ getting larger. The extrapolation factor $\theta$ is chosen to be 1 for non-accelerated and converging to 1 for accelerated algorithms.

\paragraph{Results}
The quantitative results in \cref{fig:case2:L2TV:quant} show that the accelerated algorithms are much faster than the non-accelerated versions. Moreover, it can be seen that the stochastic variant of the accelerated PA-PDHG is even faster than its deterministic variant. In addition, the results show that the accelerated SPDHG indeed converges as $1/K^2$ in the norm of the primal part. Visual assessment of the denoised images in \cref{fig:case2:L2TV:images} confirms these conclusions.

\subsection{Huber-TV Deblurring (Dual Acceleration)}
In the third example we consider deblurring with known convolution kernel where the forward operator $\mA_1$ resembles the convolution of images in $\sX = \sR^{d_1 \times d_2}, d_1 = 408, d_2 = 544$ with a motion blur of size $15 \times 15$. The noise is modeled to be Poisson with a constant background of 200 compared to the approximate data mean of $694.3$. We further assume to have the knowledge that the reconstructed image should be non-negative and upper-bounded by 100. By the nature of the forward operator $\mA x \geq 0$ whenever $x \geq 0$. Therefore the solution to \cref{EQU:MINPROB} with the Kullback--Leibler divergence \cref{EQU:EXAMPLES:KL} remains the same if we replace the Kullback--Leibler divergence by the differentiable
\begin{align}
f_1(y) = \sum_{i=1}^N \begin{cases}
        y_i + r_i - b_i + b_i \log\left(\frac{b_i}{y_i + r_i}\right) & \text{if $y_i \geq 0$} \\
        \frac{b_i}{2 r_i^2} y_i^2 + \left(1 - \frac{b_i}{r_i}\right) y_i + r_i - b_i + b_i \log\left(\frac{b_i}{r_i}\right) & \text{else}
        \end{cases}
        \label{EQU:EXAMPLES:SMOOTHKL}
\end{align}
which has a $\left(\max_i b_i/r_i^2\right)$ Lipschitz continuous gradient. The Lipschitz constant is well-defined and non-zero as both the data $b_i$ as well as the background $r_i$ are positive. In our numerically example it is approximately $0.31$.

\newcommand{\drawpic}[1]{
    \node [anchor=north west] at (0,0) {\includegraphics[width=\ImWidth]{pics/deblurring_1k2/#1}};
    \node [anchor=south west] at (-0.1cm,-5.5cm) {\includegraphics[clip, trim=330px 380px 0px 70px, width=2cm]{pics/deblurring_1k2/#1}};}
\begin{figure}%
\def\ImWidth{3.8cm}%
\centering%
\includegraphics[height=\StatsHeight]{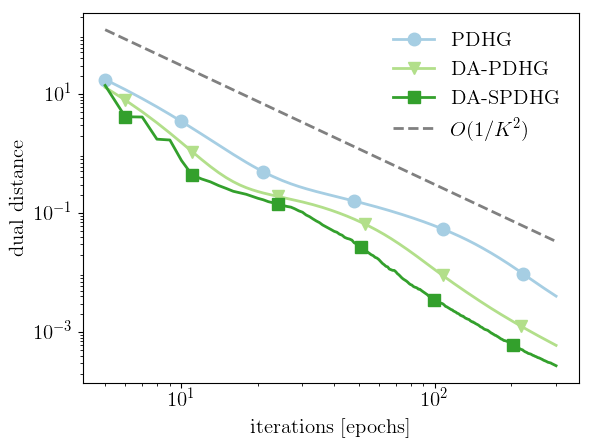} \hspace*{5mm}%
\includegraphics[height=\StatsHeight]{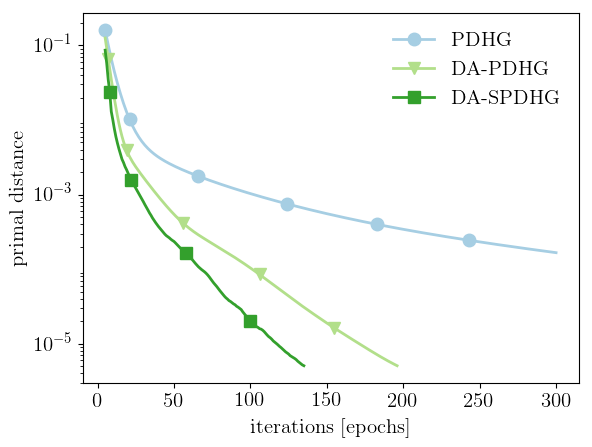}%
\caption{Dual acceleration for Huber-TV deblurring. Acceleration speeds up the convergence of both the dual variable (\textbf{left}) and the primal variable (\textbf{right}). Randomization in conjunction with acceleration yields even faster convergence. The accelerated algorithms converge with $O(1/\Iter^2)$ in the dual distance (dashed line).} \label{FIG:EXAMPLES:DEBLURRING:QUANT}%
\vspace*{4mm}%
\begin{tikzpicture}%
    \drawpic{data}%
    \node [anchor=north west] at (0,0) {\includegraphics[width=1cm]{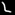}};%
\end{tikzpicture}%
\begin{tikzpicture} \drawpic{pdhg_50} \end{tikzpicture}%
\begin{tikzpicture} \drawpic{da_spdhg_uni3_150} \end{tikzpicture}%
\caption{Results after 50 epochs for deblurring with Huber-TV. \textbf{From left to right:} Blurry and noisy data with kernel (magnified), PDHG and DA-SPDHG.} \label{FIG:EXAMPLES:DEBLURRING:IMAGES}%
\end{figure}

Prior smoothness information is represented by the anisotropic TV with Huberized norm
\begin{align*}
    f_i(\mA_i x) = \alpha \sum_j \begin{cases} |y_j| & \text{if $|y_j| > \eta$} \\ \frac1{2\eta} |y_j|^2 + \frac\eta2 & \text{else} \end{cases}
\end{align*}
for $i = 2, 3$ where $y_j = \nabla_{i-1} x_j$ are finite differences, $\eta = 1$ and regularization parameter $\alpha = 0.1$. The constraints on the image are enforced by the characteristic function $g = \imath_{\sB}$ with $\sB = \{x \in \sX \, | \, 0 \leq x_j \leq 100\}$.

The convex conjugate of the modified Kullback--Leibler divergence \cref{EQU:EXAMPLES:SMOOTHKL} is
\begin{align*}
f_1^\ast(z) = \sum_{i=1}^N
    \begin{cases}
    \frac{r_i^2}{2 b_i} z_i^2 + \left(r_i - \frac{r_i^2}{b_i}\right)z_i + \frac{r_i^2}{2 b_i} + \frac{3b_i}{2} - 2 r_i - b_i \log\left(\frac{b_i}{r_i}\right) & \text{if $z_i < 1 - \frac{b_i}{r_i}$}\\
    -r_i z_i - b_i \log(1-z_i) & \text{if $1 - \frac{b_i}{r_i} \leq z_i < 1$}\\
    \infty & \text{if $z_i \geq 1$}
    \end{cases}
\end{align*}
which is $\left(\min_i \frac{r_i^2}{b_i}\right)$-strongly convex with proximal operator
\begin{align*}
  [\Prox{\sigma}{f^\ast_1}{z}]_i =
 \begin{cases}
 \frac{ b_i z_i - \sigma r_i b_i + \sigma r_i^2 } {b_i + \sigma r_i^2} & \text{if $z_i < 1 - \frac{b_i}{r_i}$} \\
 \frac12 \Bigl\{ z_i + \sigma r_i + 1 - \sqrt{(z_i + \sigma r_i - 1)^2 + 4 \sigma b_i} \Bigr\} & \text{else}
 \end{cases} \, .
\end{align*}

\paragraph{Parameters} In this experiment we choose $\gamma = 0.99$ and consider uniform sampling, i.e. $p_i = 1/n$. The number of subsets are either $n=1$ in the deterministic case or $n=3$ in the stochastic case. The (initial) step size parameters are chosen to be
\begin{itemize}
 \item PDHG: $\sigma_i = \tau = \gamma /\|\mA\| \approx 0.095$
 \item DA-PDHG: $\tilde \sigma^{(0)}_i = \mu_f / \|\mA\| \approx 0.096$ , $\tau^{(0)} = \gamma /\|\mA\| \approx 0.095$
 \item DA-SPDHG: $\tilde \sigma^{(0)} = \min_i \frac{\mu_i p_i^2}{\tau^0 \|\mA_i\|^2 + 2 \mu_i p_i (1-p_i)} \approx 0.073$, \\ \phantom{DA-SPDHG: }$\tau^{(0)} = 1 / (n \max_i \|\mA_i\|) \approx 0.032$
\end{itemize}

\paragraph{Results}
The quantitative results in \cref{FIG:EXAMPLES:DEBLURRING:QUANT} show that the algorithm converges indeed with rate $O(1/\Iter^2)$ as proven in \cref{THM:SEMISTRONGLY:DUAL}. Moreover, they also show that randomization and acceleration can be used in conjunction for further speed-ups. Some example images are shown in \cref{FIG:EXAMPLES:DEBLURRING:IMAGES} which show that randomization may lead to sharper images with the same number of epochs.

\subsection{PET Reconstruction (Linear Rate)}
For the final example we turn back to PET reconstruction but this time with linear convergence rate. This means we want to solve the same minimization problem as in the first example, but now we replace the Kullback--Leibler functional by its modified version as in the previous example. We note again that this does not change the solution of the minimization problem. Moreover, to make TV strongly convex we add another regularization term $\mu / 2 \|x\|_2^2$ to $g$. Note that the proximal operator of TV (indeed any functional) with added squared $\ell^2$-norm, i.e. $g(x) = \alpha \operatorname{TV}(x) + \mu / 2 \|x\|_2^2$, can be solved by means of the original proximal operator $\Prox{\sigma}{g}{z} = \Prox{\sigma \alpha / (1 + \sigma \mu)}{\operatorname{TV}}{z / (1 + \sigma \mu)}$. The regularization parameters are chosen as $\alpha = 0.05$ and $\mu = 0.5$.

\paragraph{Parameters} In this experiment we choose $\rho = 0.99$ and the sampling to be uniform as the operators $\mA_i$ all have similar norms. The step size parameters are chosen as derived in \cref{SEC:PARAM}, in particular, we choose
\begin{itemize}
 \item PDHG: $\sigma \approx 3.8 \cdot 10^{-4}, \tau \approx 4.8 \cdot 10^{-3}, \theta \approx 0.995$
 \item Pesquet\&Repetti: $\sigma_i = \tau = \gamma /\|\mA\| \approx 1.4 \cdot 10^{-3}$
 \item SPDHG ($n=10$ subsets): $\sigma_i \approx 1.2 \cdot 10^{-3}$, $\tau \approx 1.5 \cdot 10^{-3}$, $\theta^n \approx 0.985$
 \item SPDHG ($n=50$): $\sigma_i \approx 2.4 \cdot 10^{-3}$, $\tau \approx 5.8 \cdot 10^{-4}$, $\theta^n \approx 0.971$
\end{itemize}
Note that the contraction rates of one epoch $\theta^n$ already indicate that SPDHG ($n=50$) may be faster than PDHG and SPDHG ($n=10$).

\paragraph{Results} The quantitative results in \cref{FIG:STRCONVEX:QUANT} in terms of both distance to saddle point and objective value show that randomization speeds up the convergence so that both SPDHG and the algorithm of Pesquet\&Repetti are faster than the deterministic PDHG. Interestingly, while more subsets make SPDHG faster, this does not hold for the algorithm of Pesquet\&Repetti where the speed seems to be constant with respect to the number of subsets. Moreover, the plot on the left confirms the linear convergence as proven in \cref{THM:STRONGLY}. The visual results in \cref{FIG:STRCONVEX:QUAL} confirm these observations as SPDHG with 50 subsets and 10 epochs is (in contrast to PDHG) visually already very close to the saddle point.

\begin{figure}
\def\ImWidth{3.5cm}
\centering
\includegraphics[height=\StatsHeight]{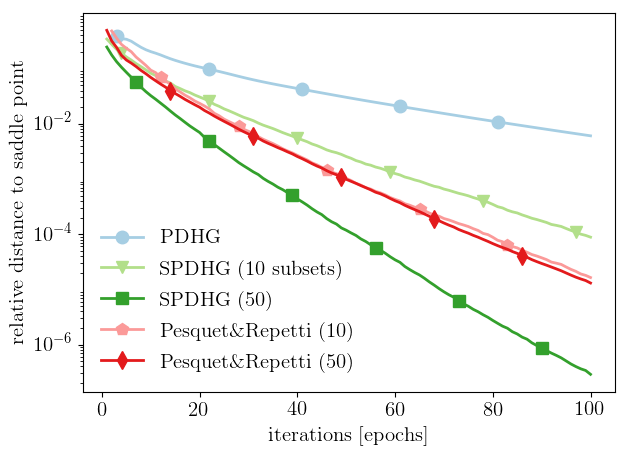} \hspace*{5mm}%
\includegraphics[height=\StatsHeight]{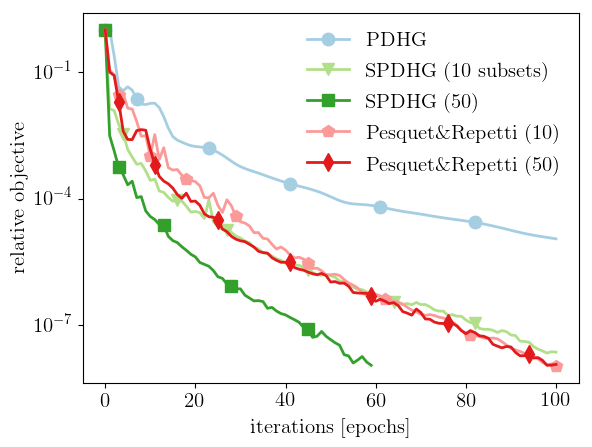}%
\caption{PET reconstruction with a strongly convex TV prior. Both the distance to the saddle point (\textbf{left}) and the objective value (\textbf{right}) show the speed-up by randomization over the deterministic PDHG. Moreover, for 50 subsets SPDHG is much faster than the algorithm proposed by Pesquet\&Repetti. Also note the linear convergence on the left as proven in \cref{THM:STRONGLY}.} \label{FIG:STRCONVEX:QUANT}%
\vspace*{4mm}
\includegraphics[clip, trim=4mm 4mm 4mm 4mm, width=\ImWidth]{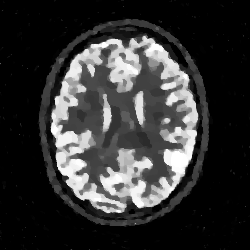}\hspace*{5mm}%
\includegraphics[clip, trim=4mm 4mm 4mm 4mm, width=\ImWidth]{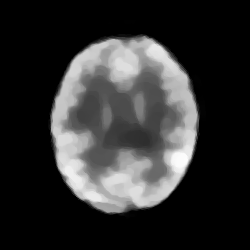}\hspace*{1mm}%
\includegraphics[clip, trim=4mm 4mm 4mm 4mm, width=\ImWidth]{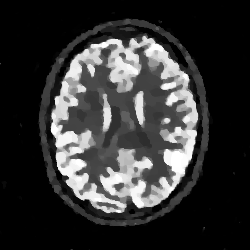}%
\caption{PET reconstruction results after 10 epochs. \textbf{Left:} Approximate primal part of saddle point computed by 2000 iterations of PDHG. \textbf{Right:} PDHG and SPDHG (50 subsets).} \label{FIG:STRCONVEX:QUAL}%
\end{figure}%

\section{Conclusions and Future Work}
We proposed a natural stochastic generalization of the deterministic PDHG algorithm to convex-concave saddle point problems that are separable in the dual variable. The analysis was carried out in the context of {\em arbitrary samplings} which enabled us to obtain known deterministic convergence results as special cases. We proposed optimal choices of the step size parameters with which the proposed algorithm showed superior empirical performance on a variety of optimization problems in imaging.

In the future, we would like to extend the analysis to include iteration dependent (adaptive) probabilities \cite{Csiba2015} and strong convexity parameters to further exploit the structure of many relevant problems. Moreover, the present optimal sampling strategies are only for scalar-valued step sizes and serial sampling. In the future, we wish to extend this to other sampling strategies such as multi-block or parallel sampling.

\appendix

\section{Postponed Proofs}

\begin{proof}[Proof of \cref{LEM:ESO}]
With the definition of $y^+$ and $\mQ$ we have by completing the norm for any $x \in \sX$ that
\begin{align}
  2\Angle{\mQ \mA x}{y^+ - y}
  &= 2\left\langle x, \sum_{i \in \sS} \mAa_i p_i^{-1}(\hat y_i - y_i) \right\rangle \notag \\
  &= 2\left\langle c^{1/2} \mT^{-1/2} x, c^{-1/2} \sum_{i \in \sS} \mC_i^\ast p_i^{-1} \mS_i^{-1/2}(\hat y_i - y_i) \right\rangle \notag \\
  &\geq - c \|x\|^2_{\mT^{-1}} - \frac{1}{c} \left\|\sum_{i \in \sS} \mC_i^\ast z_i \right\|^2 \, . \label{EQU:ESO:1}
\end{align}
where we used $z_i \De p_i^{-1} \mS_i^{-1/2}(\hat y_i - y_i)$. Moreover, the expectation of the second term of the right hand side of \cref{EQU:ESO:1} can be estimated as
\begin{align}%
  \E_{\sS} \left\|\sum_{i \in \sS} \mC_i^\ast z_i \right\|^2
  \leq \sum_{i=1}^n p_i v_i \|z_i\|^2
  \leq \left(\max_i \frac{v_i}{p_i}\right) \sum_{i=1}^n p_i^2 \|z_i\|^2 \label{EQU:ESO:2}
\end{align}
where the first inequality is due to the ESO inequality \cref{EQU:ESO}. Inserting $z$ leads to
\begin{align}%
\sum_{i=1}^n p_i^2 \|z_i\|^2
  = \sum_{i=1}^n p_i \|\hat y_i - y_i\|^2_{p_i^{-1} \mS_i^{-1}}
  = \E_{\sS} \|y^+ - y\|^2_{\mQ \mS^{-1}} \label{EQU:ESO:3}
\end{align}
where the last equation holds true by the definition of the expectation. Combining the expected value of inequality \cref{EQU:ESO:1} with \cref{EQU:ESO:2,EQU:ESO:3} yields the assertion.
\end{proof}

\begin{proof}[Proof of \cref{LEM:STD}]
By the definition of the proximal operator, for any $(x, y) \in \sW$ it holds that
\begin{align*}
    g(x) &\geq g(\xkp) + \Angle{\mTk^{-1}(\xk - \xkp) - \mAa \ybk}{x - \xkp} + \frac{\mu_g}{2} \|x - \xkp\|^2 \\
    f^\ast_i(y_i) &\geq f^\ast_i(\yhkp_i) + \Angle{[\mSk]_i^{-1}(\yk_i - \yhkp_i) + \mA_i \xkp}{y_i - \yhkp_i} + \frac{\mu_i}{2} \|y - \yhkp\|^2
\end{align*}
for $i = 1, \ldots, n$. Summing twice all inequalities and exploiting the identity
$$2 \Angle{\mB(a - b)}{c - b} = \|a - b\|^2_\mB + \|b - c\|^2_\mB - \|a - c\|^2_\mB$$
yields
\begin{align*}
\|\xk - x\|_{\mTk^{-1}}^2 + \|\yk - y\|_{\mSk^{-1}}^2
&\geq \|\xkp - x\|_{\mTk^{-1} + \mu_g \mI}^2 + \|\yhkp - y\|_{\mSk^{-1} + \mM}^2 \\
&\qquad + 2 \left(g(\xkp) - g(x) + f^\ast(\yhkp) - f^\ast(y)\right) \\
&\qquad + 2 \left(\Angle{\mA \xkp}{y - \yhkp} - \Angle{\mA(x - \xkp)}{\ybk} \right)\\
&\qquad + \|\xkp - \xk\|_{\mTk^{-1}}^2 + \|\yhkp - \yk\|_{\mSk^{-1}}^2
\end{align*}
where we used the definition of the inner product and the norm on the product space $\sY$.
It now suffices to complete the generalized distances $\cG(\xkp | w)$ and $\cF(\yhkp | w)$.
\end{proof}

\begin{proof}[Proof of \cref{LEM:STD:STOCHASTIC}]
We follow a similar line of arguments as in the proof of \cref{THE:NONSTRCVX}. Note that for any saddle point $\wo = (\xo, \yo)$ we have $$2 \cH(\xkp, \yhkp | \wo) = 2 D_h^q(\xkp, \yhkp, \wo) \geq \|\xkp - \xo\|^2_{\mu_g} + \|\yhkp - \yo\|^2_{\mM}$$
such that the estimate of \cref{LEM:STD} can be written with $w = \wo$ as
\begin{align*}
\|\xk - \xo\|_{\mTk^{-1}}^2 + \|\yk - \yo\|^2_{\mSk^{-1}}
&\geq \|\xkp - \xo\|_{\mTk^{-1} + 2 \mu_g\mI}^2 + \|\yhkp - \yo\|^2_{\mSk^{-1} + 2 \mM} \\
&\qquad - 2 \langle \mA(\xkp - \xo), \yhkp - \ybk \rangle \\
&\qquad + \|\xkp - \xk\|_{\mTk^{-1}}^2 + \|\yhkp - \yk\|^2_{\mSk^{-1}}
\end{align*}
using the rule \cref{EQU:MATRIXID}. With \cref{EQU:THM1:N,EQU:THM1:V} and again \cref{EQU:MATRIXID} we arrive at
\begin{align}
& \quad \; \|\xk - \xo\|_{\mTk^{-1}}^2 + \|\yk - \yo\|^2_{\mQ \mSk^{-1} + 2 \mM(\mQ - \mI)} \notag \\
&\geq \Ekp \biggl\{ \|\xkp - \xo\|_{\mTk^{-1} + 2 \mu_g\mI}^2 + \|\yhkp - \yo\|^2_{\mQ\mSk^{-1} + 2 \mM\mQ} \notag\\
&\qquad - 2 \langle \mA(\xkp - \xo), \mQ (\ykp - \yk) + \yk - \ybk \rangle \notag\\
&\qquad + \|\xkp - \xk\|_{\mTk^{-1}}^2 + \|\ykp - \yk\|^2_{\mQ \mSk^{-1}} \biggr\} \, . \label{EQU:LEM:STDSTOCH:1}
\end{align}
Inserting the extrapolation \cref{EQU:LEM:EXTRA} into the inner product yields
\begin{align}
& \quad \; - \thkm \langle \mQ \mA(\xk - \xo), \yk - \ykm \rangle \notag\\
&\geq \Ekp \biggl\{ \thkm \langle \mQ \mA(\xkp - \xk), \yk - \ykm \rangle \notag\\
&\qquad - \langle \mQ \mA(\xkp - \xo), \ykp - \yk \rangle \biggr\} \, . \label{EQU:LEM:STDSTOCH:IP}
\end{align}
The assertion is shown by taking the expectations $\E^{(\iter, \iter-1)} \De \Ek\Ekm$ on \cref{EQU:LEM:STDSTOCH:1}, using \cref{EQU:LEM:STDSTOCH:IP} and estimating the last inner product by \cref{LEM:ESO} as
\begin{align*}
&\quad \; 2 \thkm \Ekp \Angle{\mQ \mA (\xkp - \xk)}{\yk - \ykm} \\
&\geq - \Ekp \left\{ \|\xkp - \xk\|_{\mTk^{-1}}^2 + (\gamma \thkm)^2 \|\yk - \ykm\|^2_{\mQ \mSk^{-1}} \right\} \, .
\end{align*}
\end{proof}

\bibliographystyle{siamplain}
\bibliography{library}
\end{document}